\documentclass[12pt]{article}
\usepackage{amsmath,amsfonts,amssymb,amsthm,epsfig,epstopdf,titling,url,array}
\usepackage{eqnarray}
\usepackage{enumerate}
\usepackage[a4paper]{geometry}
\usepackage{amscd}

\usepackage{authblk}
\geometry{hmargin=3.3cm,vmargin=3cm}

\theoremstyle{plain} 
\newtheorem{theorem}{ Theorem}[section]
\newtheorem{lemma}[theorem]{Lemma}
\newtheorem{proposition}[theorem]{Proposition}
\newtheorem{corollary}[theorem]{Corollary}
\newtheorem{definition}[theorem]{Definition}
\newtheorem{example}[theorem]{Example}
\newtheorem{remark}{\bf Remark}
\newtheorem* {note*}{Note}
\newcommand{\co} {\mathbb{C}}
\newcommand{\R} {\mathbb{R}}

\newcommand{\iy} {\infty}
\newcommand{\N} {\mathbb{N}}
\newcommand{\D} {\mathbb{D}}
\newcommand{\Z} {\mathbb{Z}}

\newcommand{\al}{\alpha}
\newcommand{\lm}{\lambda}
\newcommand{\si}{\sigma}
\newcommand{\eps}{\epsilon}
\newcommand{\h}{\mathcal H}
\newcommand{\A}{\mathcal A}
\newcommand{\BH}{\mathcal B(\mathcal H)}
\newcommand{\suc}{\;\big|\;}
\newcommand{\norm}[1]{\left\Vert#1\right\Vert}
\newcommand{\abs}[1]{\left\vert#1\right\vert}
\newcommand{\set}[1]{\left\{#1\right\}}

\newcommand{\brc}[1]{\left(#1\right)}
\newcommand{\iner}[1]{\left<#1\right>}
\newcommand{\LZ}{\ell^{2}(\mathbb Z)}
\newcommand{\LN}{\ell^{2}(\mathbb N)}
\newcommand{\LNp}{\ell^{p}(\mathbb N)}
\setlength{\parindent}{0pt}
\title{\bf \Large A review of some works in the theory of diskcyclic operators}
\bf
\author[1]{\bf\footnotesize Nareen Bamerni \thanks{nareen\_bamerni@yahoo.com}}
\author[1]{\bf Adem K{\i}l{\i}\c{c}man \thanks{akilicman@yahoo.com}}
\author[2]{\bf Mohd Salmi Md Noorani\thanks{msn@ukm.my}}
\affil[1]{\bf Department of Mathematics, University Putra Malaysia,
43400 UPM, Serdang, Selangor, Malaysia}
\affil[2]{\bf School of Mathematical Science, University Kebangsaan Malaysia,
43600 UKM Bangi, Selangor, Malaysia}

\begin{document}
\date{}
\maketitle
\begin{abstract}
In this paper, we give a brief review concerning diskcyclic operators and then we provide some further characterizations of diskcyclic operators on separable Hilbert spaces. In particular, we show that if $x\in \h$ has a disk orbit under $T$ that is somewhere dense in $\h$ then the disk orbit of $x$ under $T$ need not be everywhere dense in $\h$.
We also show that the inverse and  the adjoint of a diskcyclic operator need not be diskcyclic. Moreover, we establish another diskcyclicity criterion and use it to find a necessary and sufficient condition for unilateral backward shifts that are diskcyclic operators. We show that a diskcyclic operator exists on a Hilbert space $\h$ over the field of complex numbers if and only if $\dim(\h)=1$ or $\dim(\h)=\iy$ . Finally we give a sufficient condition for the somewhere density disk orbit to be everywhere dense.\\

\noindent{\bf Keywords:} Hypercyclic operators, Supercyclic operators, Diskcyclic operators,  Weighted shift operators.\\
\noindent{\bf AMS Subject Classification:} Primary 47A16; Secondary 47B37.
\end{abstract}

\section{Introduction.}
In this paper, all Hilbert spaces are separable over the field $\co$ of complex numbers. As usual, $\N$ is the set of all non-negative integers, $\Z$ is the set of all integers and $\BH$ is the space of all continuous linear operators on a Hilbert space $\h$.\\

An operator $T$ is called {\bf hypercyclic} if there is some vector $x\in \h$ such that $Orb(T,x)=\{T^n x: n\in \N\}$ is dense in $\h$, where such a vector $x$ is called {\bf hypercyclic} for $T$. The first example of hypercyclic operator was given by Rolewicz in \cite{Rolewicz}. He proved that if $B$ is a backward shift on the Banach space $\LNp$ then $\lm B$ is hypercyclic for any complex number $\lm$ such that $|\lm|>1$. This leads us to the consideration of scaled orbits. Later, Hilden and  Wallen in \cite{Hilden} undertook the concept of supercyclic operators. An operator $T$ is called {\bf supercyclic} if there is a vector $x\in \h$ such that $\co Orb(T,x)=\{\lm T^n x: \lm \in \co,\, n\in \N\}$ is dense in $\h$, where $x$ is called {\bf supercyclic vector}. For the more detailed information on both hypercyclicity and supercyclicity, the reader may refer to \cite{dynamic,Erdman,Kitai}.\\

In the same spirit, since the operator $\lm B$ is not hypercyclic whenever $|\lm| \le 1$, we are motivated to study the disk orbit. The diskcyclicity phenomenon was introduced by Zeana in her PhD thesis \cite{cyclic}. An operator $T$ is called {\bf diskcyclic} if there is a vector $x\in \h$ such that the set $\D Orb(T,x)=\{\al T^nx:n\geq 0, \al\in \co ,|\al|\leq 1\}$ is dense in $\h$, where the vector $x$ is called {\bf diskcyclic for $T$}. The diskcyclic criterion - a sufficient set of conditions for an operator to be diskcyc- was created by Zeana in \cite{cyclic}, who showed that the diskcyclicity criterion is a mid way between the hypercyclicity and the supercyclicity criterions.\\

The following diagram shows the relationship among the cyclicity operators on a Hilbert space.
\[ \begin{array}{ccccccc}
\rm{ Hypercyclicity} &\Rightarrow& \rm{ Diskcyclicity} &\Rightarrow& \rm{Supercyclicity}.
  \end{array} \]

It was known that the hypercyclic operators are strictly infinite dimensional phenomena; however, the supercyclic operators exist on both one-dimensional and infinite dimensional Hilbert spaces \cite{Gerd}.\\

This article consists of three sections. The Section $2$ gives a brief review of some works on the diskcyclicity. Some characterizations of diskcyclic bilateral weighted shifts on $\LZ$ will be described. We give an example of a diskcyclic operator that is not hypercyclic and an example of a supercyclic operator that is not diskcyclic.\\

In Section $3$, we represent another equivalent version to the diskcyclicity criterion, and we use it to show that a unilateral backward weighted shift is hypercyclic if and only if it is diskcyclic. Through Proposition \ref{D on fin}, we prove the existence of diskcyclic operators on every one-dimensional complex Hilbert spaces ( A Hilbert space over the field of complex numbers) .
The diskcyclicity shares many structures with hypercyclicity and supercyclicity, nevertheless not all. Based on the previous works, an operator is hypercyclic (or supercyclic) if and only if its inverse is hypercyclic \cite{Kitai} ( or supercyclic respectively). However, through Example \ref{d. not adj} (Example \ref{d not inv} and Example \ref {inv not}), we show that the adjoint (inverse) of diskcyclic operators need not be diskcyclic. In addition, the somewhere density of the orbit of an operator(the cone generated by orbit) implies the everywhere density of the orbit (cone generated by orbit) \cite{Feldman}. However, we show that the somewhere density of the disk orbit of an operator does not imply to the everywhere density of it, by giving the Counterexample \ref{some not every}. Finally, in Corollary \ref{suf somwhere}, we give a sufficient condition for the somewhere density disk orbit of an operator to be everywhere dense and we also give some spectral properties of diskcyclic operators.
\section{Preliminaries.}
We denote the disk orbit $\{\al T^nx:n\geq 0, \al\in \co ,|\al|\leq 1\}$
by $\D Orb(T,x)$, the set of all diskcyclic vectors by $\D C(T)$ and the set of all diskcyclic operators on a Hilbert space $\h$ by $\D C(\h)$.\\
The following results are due to Zeana \cite{cyclic} unless otherwise stated.
The following proposition gives a necessary but not sufficient condition for the diskcyclicity.
\begin{proposition}\label{}
If $x$ is a diskcyclic vector for $T$, then
$$ \inf\{\al\norm{T^nx}: n\ge 0, \al\in[0,1]\}=0
\mbox{ and } \sup\{\norm{T^nx}: n\ge 0\}=\iy.$$
\end{proposition}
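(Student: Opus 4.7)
My plan is to prove each of the two claims separately, and in each case derive the conclusion from a direct application of the density of $\D Orb(T,x)$ in $\h$.

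For the supremum statement, I would argue by contradiction. Suppose $M := \sup_{n \ge 0}\norm{T^n x} < \iy$. Then every element of the disk orbit satisfies
\[
\norm{\al T^n x} \;=\; \abs{\al}\,\norm{T^n x} \;\le\; 1 \cdot M \;=\; M,
\]
so $\D Orb(T,x)$ is entirely contained in the closed ball of radius $M$ centered at $0$. But $\h$, as a (nontrivial) Hilbert space, cannot coincide with any bounded set, so $\D Orb(T,x)$ cannot be dense in $\h$. This contradicts the diskcyclicity of $x$, so $\sup_{n}\norm{T^n x} = \iy$.

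For the infimum statement, I would use density of the disk orbit near the zero vector of $\h$. Given $\eps > 0$, the density of $\D Orb(T,x)$ guarantees existence of $\al\in\co$ with $\abs{\al}\le 1$ and an integer $n\ge 0$ such that $\norm{\al T^n x - 0} < \eps$. Setting $\beta = \abs{\al}\in [0,1]$ yields
\[
\beta\,\norm{T^n x} \;=\; \abs{\al}\,\norm{T^n x} \;=\; \norm{\al T^n x} \;<\; \eps,
\]
so the infimum in the proposition is at most $\eps$; since $\eps$ was arbitrary, the infimum is $0$.

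I do not anticipate a genuine obstacle: both halves are immediate from the definition of diskcyclicity once the correct approximation targets ($0 \in \h$ for the infimum, and ``unboundedness of $\h$'' for the supremum) are identified. The only subtlety worth flagging is that the infimum claim is formally trivial if one is allowed to pick $\al = 0$; the substance of the statement is that the disk orbit can be forced close to zero even when $\norm{T^n x}$ is bounded away from zero, which is exactly what the density argument delivers.
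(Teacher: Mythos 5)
Your proposal is correct and follows essentially the same route as the paper: the supremum half is the same contradiction argument (a bounded disk orbit cannot be dense in $\h$), and the infimum half is the observation — which the paper dispatches even more quickly by noting $\al=0$ is allowed — that the infimum is trivially $0$. No gaps.
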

\begin{proof}
Since $\al \in [0,1]$, then it is clear that  $\inf\{\al\norm{T^nx}: n\ge 0, \al\in[0,1]\}=0$. Towards a contradiction, assume that
$$
\sup\set{\al\norm{T^nx} : n\ge 0, \al\in[0,1]}=m<\infty,
$$
and $y\in\h$ such that  $\norm{y}> m$.
Since $T\in\D C(\h)$, then there exist sequences $\set{n_k}$ in $\N$ and $\set{\al_k}$ in $\co$; $\abs{\al_k}\le 1$ such that $\al_kT^{n_k}x\to y$. It follows that $\norm{y}\le m$  which is contradiction.
\end{proof}
\begin{proposition}\label{direct}
If $\set{\h_i}_{i=1}^n$ is a family of Hilbert spaces, $T_i\in {\mathcal B(\mathcal H_i)}$ and $\oplus T_i\in \D C(\oplus\h_i)$, then $T_i\in \D C(\h_i)$ for all $1\le i \le n$.
\end{proposition}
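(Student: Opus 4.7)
The plan is to show that each coordinate $x_i$ of a diskcyclic vector $x \in \oplus \h_i$ for $\oplus T_i$ is itself a diskcyclic vector for $T_i$, by extracting approximations along the $i$-th projection.

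First, I would fix a diskcyclic vector $x = x_1 \oplus \cdots \oplus x_n$ for $T := \oplus_{j=1}^{n} T_j$ on $\oplus \h_j$ (such $x$ exists by hypothesis). Fix an index $i$ and pick an arbitrary target $z \in \h_i$ together with $\eps > 0$; I need to produce $n \in \N$ and $\al \in \co$ with $|\al| \le 1$ such that $\norm{\al T_i^{n}x_i - z} < \eps$.

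Next, I would form the vector $y := 0 \oplus \cdots \oplus z \oplus \cdots \oplus 0 \in \oplus \h_j$, where $z$ sits in the $i$-th slot. Diskcyclicity of $T$ with vector $x$ supplies sequences $\{n_k\}\subset \N$ and $\{\al_k\}\subset\co$ with $|\al_k|\le 1$ and $\al_k T^{n_k} x \to y$. Since $T^{n_k} x = T_1^{n_k}x_1 \oplus \cdots \oplus T_n^{n_k}x_n$, convergence in the direct-sum norm forces coordinate-wise convergence; in particular the $i$-th coordinate gives $\al_k T_i^{n_k} x_i \to z$. Choosing $k$ large enough yields the desired $\al = \al_k$ and $n = n_k$. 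As $z\in\h_i$ was arbitrary, $x_i \in \D C(T_i)$, so $T_i \in \D C(\h_i)$.

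I do not expect a serious obstacle: the only fact being used is that the coordinate projection $P_i : \oplus_j \h_j \to \h_i$ is a contraction, which preserves the disk constraint $|\al_k|\le 1$ and passes convergence through. The small care needed is to place $z$ in the $i$-th slot and zero elsewhere (rather than trying to approximate a full direct-sum vector), so that the approximation obtained from the diskcyclic orbit of $x$ collapses cleanly onto an approximation of $z$ by $\al_k T_i^{n_k}x_i$.
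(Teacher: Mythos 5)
Your proof is correct and takes essentially the same route as the paper: fix a diskcyclic vector $x$ for $\oplus T_i$, approximate a target by $\al_k(\oplus T_i)^{n_k}x$, and observe that convergence in the direct-sum norm passes to each coordinate, giving $\al_k T_i^{n_k}x_i\to y_i$. The only cosmetic difference is that the paper approximates an arbitrary $y=(y_1,\dots,y_n)$ and reads off all coordinates simultaneously, whereas you place the target $z$ in the $i$-th slot and zero elsewhere.
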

\begin{proof}
Let $y=(y_1,y_2,\ldots)\in\oplus\h_i$ and $x=(x_1,x_2,\ldots)\in\D C(\oplus T_i)$, then there exist sequences
$\set{\al_k}$ in $\co$; $\abs{\al_k}\le 1$ and $\set{n_k}$ in $\N$ such that $\al_k(\oplus
T_i)^{n_k}x\to y$, as $n_k\to\infty$. It easily follows that $\al_kT_i^{n_k}x_i\to y_i$ for all $i$.
\end{proof}
\begin{definition}
A bounded linear operator $T : \h \rightarrow \h $ is called {\bf disk transitive} if for any pair $U, V$ of nonempty open subsets of $X$, there exist $\alpha \in \co ; 0 < |\alpha| \leq 1$, and $n \geq 0$ such that $T^n(\alpha U) \cap V \neq \phi$ or equivalently, there exist $\alpha \in \co ; |\alpha| \geq 1$, and $n \geq 0$ such that $T^{-n}(\alpha U) \cap V \neq \phi$.
\end{definition}
\begin{proposition}\label{1.3}
Let $\h$ and  ${\mathcal K}$ be Hilbert spaces, $T\in\BH$ and $S\in {\mathcal B(K)}$. Assume that $G:\h \rightarrow {\mathcal K}$ is a bounded linear transformation
with dense range and  $SG = GT$. If $T\in\D C(\h)$, then $S\in\D C({\mathcal K})$.
\[
  \begin{CD}
    \h @>{T}>> \h\\
    @V{G}VV @VV{G}V\\
    \mathcal{K} @>>{S}> \mathcal{K}
  \end{CD}
\]
\end{proposition}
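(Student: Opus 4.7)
The plan is a standard transference/intertwining argument: take a diskcyclic vector $x\in\h$ for $T$ and show that $Gx$ is diskcyclic for $S$.

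First I would iterate the intertwining relation. From $SG=GT$, an easy induction gives $S^nG=GT^n$ for every $n\ge 0$. Consequently, for any scalar $\al\in\co$ with $|\al|\le 1$ and any $n\ge 0$,
\[
\al\,S^n(Gx)\;=\;G\bigl(\al\,T^nx\bigr),
\]
which yields the set identity $\D Orb(S,Gx)=G\bigl(\D Orb(T,x)\bigr)$. So the disk orbit of $Gx$ under $S$ is precisely the $G$-image of the disk orbit of $x$ under $T$.

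Next I would verify that this image is dense in $\mathcal K$. Fix $y\in\mathcal K$ and $\eps>0$. Since $G$ has dense range, choose $u\in\h$ with $\norm{Gu-y}<\eps/2$. Since $x$ is diskcyclic for $T$, the set $\D Orb(T,x)$ is dense in $\h$, so we can pick $\al\in\co$ with $|\al|\le 1$ and $n\ge 0$ so that $\norm{\al T^n x-u}<\eps/(2(\norm{G}+1))$. Then by the intertwining,
\[
\norm{\al S^n(Gx)-y}\;\le\;\norm{G}\,\norm{\al T^nx-u}+\norm{Gu-y}\;<\;\eps,
\]
which shows $\D Orb(S,Gx)$ is dense in $\mathcal K$, i.e.\ $Gx\in\D C(\mathcal K)$ and hence $S\in\D C(\mathcal K)$.

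There is no real obstacle here; the only thing to be careful about is the order of approximation (first approximate $y$ by something in $G(\h)$ using dense range, then approximate the preimage by an element of the disk orbit using continuity of $G$). The same template proves the analogous transference results for hypercyclicity and supercyclicity, and the factor $\al$ with $|\al|\le 1$ passes through $G$ without issue since $G$ is linear and $|\al|\le 1$ is preserved, so the disk constraint causes no extra work.
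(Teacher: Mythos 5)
Your proof is correct and follows essentially the same route as the paper: both transfer the diskcyclic vector $x$ to $Gx$ via the iterated intertwining $S^nG=GT^n$ and then use continuity and dense range of $G$ to conclude that $\D Orb(S,Gx)=G(\D Orb(T,x))$ is dense in $\mathcal K$. The paper phrases the density step as the inclusion $\overline{G(A)}\supseteq G(\overline{A})$ while you spell it out with an explicit $\eps$-estimate, but these are the same argument.
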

\begin{proof}
Let $x\in \D C(T)$. Then we have
\begin{align*}
\overline{\D Orb(S,Gx)}&= \overline{\set{\al S^nGx:n\geq 0, \al\in \co ,|\al|\leq 1}}\\
                       &=\overline{\set{\al GT^nx:n\geq 0, \al\in \co ,|\al|\leq 1}}\\
											 &=\overline{G \set{\al T^nx:n\geq 0, \al\in \co ,|\al|\leq 1}}\\
											&\supseteq G\left( \overline{\set{\al T^nx:n\geq 0, \al\in \co ,|\al|\leq 1}}\right)\\
											&= G(\h).
\end{align*}
Since $R(G)$ is dense in ${\mathcal K}$, it follows that $\D Orb(S,Gx)$ is dense in ${\mathcal K}$. Thus $S\in \D C({\mathcal K})$ with diskcyclic vector $Gx$.
\end{proof}
\begin{proposition} Let $ T,\, S\in \BH$ such that $ST=TS$ and $R(S)$ be a dense set in $\h$. If $x\in \D C(T)$, then $Sx\in \D C(T)$.
\end{proposition}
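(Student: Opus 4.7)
The plan is to recognize this as a direct corollary of Proposition \ref{1.3}. Apply that proposition with $\mathcal{K}=\h$, with the operator called $S$ there replaced by $T$, and with the intertwiner $G$ taken to be the present $S$. The intertwining hypothesis $SG=GT$ in Proposition \ref{1.3} then becomes $TS=ST$, which holds by assumption, and the dense-range hypothesis on $G$ becomes the dense-range hypothesis on $S$, which also holds. Proposition \ref{1.3} then delivers $T\in\D C(\h)$ with diskcyclic vector $Gx=Sx$, which is precisely the conclusion we want.

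If instead a self-contained proof is preferred, I would reproduce the short argument underlying Proposition \ref{1.3} in this special setting. From $ST=TS$ one gets $ST^n=T^nS$ for every $n\ge 0$, hence
\begin{align*}
\D Orb(T,Sx) &= \set{\al T^n Sx:n\ge 0,\ \al\in\co,\ \abs{\al}\le 1}\\
             &= \set{\al S T^n x:n\ge 0,\ \al\in\co,\ \abs{\al}\le 1}\\
             &= S\bigl(\D Orb(T,x)\bigr).
\end{align*}
Continuity of $S$ gives $\overline{S(A)}\supseteq S(\overline{A})$ for any set $A$, so applying this with $A=\D Orb(T,x)$ and using $x\in\D C(T)$ yields
$$\overline{\D Orb(T,Sx)}\supseteq S\bigl(\overline{\D Orb(T,x)}\bigr)=S(\h)=R(S).$$
Since $R(S)$ is dense in $\h$, taking closures again gives $\overline{\D Orb(T,Sx)}=\h$, i.e.\ $Sx\in\D C(T)$.

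There is essentially no serious obstacle here; the only point that needs a moment of care is the direction of the inclusion $\overline{S(A)}\supseteq S(\overline{A})$ (rather than equality), which is all that is needed because afterwards we only use that $R(S)$ is dense. The commutation $ST=TS$ is used to shuffle $S$ past $T^n$ so that the disk orbit of $Sx$ is exactly the $S$-image of the disk orbit of $x$; this is the conceptual heart of the argument.
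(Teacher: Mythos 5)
Your self-contained argument is exactly the paper's own proof: commute $S$ past $T^n$, observe $\D Orb(T,Sx)=S(\D Orb(T,x))$, and use continuity of $S$ together with the density of $R(S)$. Your reduction to Proposition \ref{1.3} (with $\mathcal{K}=\h$, intertwiner $G=S$, and the same $T$ on both sides) is also valid and is really the same computation in disguise; the only small caveat is that the statement of Proposition \ref{1.3} only asserts $T\in\D C(\h)$, and the claim that $Gx=Sx$ is itself a diskcyclic vector comes from its proof rather than its statement, as you correctly note.
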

\begin{proof}
Since $x\in \D C(T)$, then
\begin{align*}
\overline{\D Orb(T,Sx)}&= \overline{\set{\al T^nSx:n\geq 0, \al\in \co ,|\al|\leq 1}}\\
                       &=\overline{\set{\al ST^nx:n\geq 0, \al\in \co ,|\al|\leq 1}}\\
											 &=\overline{S \set{\al T^nx:n\geq 0, \al\in \co ,|\al|\leq 1}}\\
											&\supseteq S\left( \overline{\set{\al T^nx:n\geq 0, \al\in \co ,|\al|\leq 1}}\right)\\
											&= S(\h)=R(S).
\end{align*}
Thus $\D Orb(T,Sx)$ is dense in $\h$ and hence $Sx\in \D C(T)$.
\end{proof}
From the last proposition one can easily deduce that there are many diskcyclic vectors if the operator has one diskcyclic vector.
\begin{corollary}\label{huge}
If $x$ is a diskcyclic vector for $T$, then $T^nx$ is also a diskcyclic vector for $T$ for all $ n\in \N$.
\end{corollary}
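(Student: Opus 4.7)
The plan is to apply the previous Proposition with $S = T^n$. Since $T^n$ commutes with $T$, the only hypothesis that needs checking is that $R(T^n)$ is dense in $\h$.

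To establish density of $R(T^n)$, I would exploit the fact that $\D Orb(T,x)$ is dense. The key observation is that the ``initial segment'' $A_n := \{\al T^k x : 0 \le k < n,\ |\al|\le 1\}$ is bounded: it is contained in the union of $n$ closed disks, with norm bound $M := \max_{0\le k<n}\|T^k x\|$. Consequently, for any $y\in\h$ with $\|y\| > M$, any approximating sequence $\al_j T^{k_j} x \to y$ must eventually satisfy $k_j \ge n$, for otherwise $\|y\|\le M$. For such indices $k_j \ge n$, one has $\al_j T^{k_j}x = T^n(\al_j T^{k_j - n}x) \in R(T^n)$, so $y \in \overline{R(T^n)}$.

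Since the set $\{y\in\h : \|y\| > M\}$ is dense in $\h$, this forces $\overline{R(T^n)} = \h$. Then the hypotheses of the previous Proposition hold for $S = T^n$, and it delivers $T^n x \in \D C(T)$, completing the argument. For $n = 0$ the claim is trivial since $T^0 x = x$ is diskcyclic by assumption.

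The main point requiring attention is verifying $\overline{R(T^n)} = \h$, which the authors likely consider obvious but in fact rests on the bounded-initial-segment argument sketched above. Apart from that, the corollary is a direct specialization of the preceding Proposition, and no separate density or topological work is needed.
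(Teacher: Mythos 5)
Your overall route is exactly the paper's: the corollary is presented as an immediate consequence of the preceding proposition, applied with $S=T^{n}$ (which commutes with $T$), so the only thing to check is that $R(T^{n})$ is dense. Your bounded-initial-segment argument for that density is the right idea and is a worthwhile addition, since the paper leaves this verification entirely implicit.

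However, the final step as written is wrong: the set $\{y\in\h : \norm{y}>M\}$ is \emph{not} dense in $\h$ --- its closure is $\{y : \norm{y}\ge M\}$, which misses the open ball $B(0,M)$ whenever $M>0$ --- so you cannot conclude $\overline{R(T^{n})}=\h$ by ``a closed set containing a dense set is everything.'' The repair is immediate but uses structure you did not invoke: $\overline{R(T^{n})}$ is a closed \emph{linear subspace} of $\h$. You have shown it contains every $y$ with $\norm{y}>M$; hence for an arbitrary nonzero $z\in\h$ the vector $(2M/\norm{z})\,z$ lies in $\overline{R(T^{n})}$, and closure under scalar multiplication gives $z\in\overline{R(T^{n})}$, so $\overline{R(T^{n})}=\h$. (Equivalently: a closed subspace with nonempty interior is the whole space.) With that one-line fix, your argument is complete and coincides with the paper's intended derivation from the commuting-operator proposition; the case $n=0$ is, as you note, trivial.
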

\begin{proposition}\label{DT}
Every diskcyclic operator on $\h$ is disk transitive.
\end{proposition}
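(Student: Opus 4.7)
The plan is to exploit the fact that diskcyclic vectors are abundant in $\h$. By Corollary \ref{huge}, $T^{n_1}x$ is diskcyclic for every $n_1\ge 0$, and applying the proposition immediately preceding Corollary \ref{huge} with $S=\alpha_1 I$---which commutes with $T$ and has dense range as soon as $\alpha_1\ne 0$---shows that $\alpha_1 T^{n_1}x$ is itself diskcyclic for $T$ whenever $\alpha_1\ne 0$. In other words, every nonzero element of $\D Orb(T,x)$ is again a diskcyclic vector, which is the engine of the proof.

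With this preparation, fix a diskcyclic vector $x$ and let $U,V\subseteq\h$ be nonempty open sets. I may assume $\h\ne\{0\}$, so that $U$ and $V$ each contain a nonzero point. Using density of $\D Orb(T,x)$, I first select $\alpha_1\in\co$ with $|\alpha_1|\le 1$ and $n_1\ge 0$ such that $u:=\alpha_1 T^{n_1}x\in U$ and $u\ne 0$; in particular $\alpha_1\ne 0$. By the preparatory observation, $u$ is diskcyclic for $T$, so $\D Orb(T,u)$ is dense in $\h$. Hence there exist $\alpha\in\co$ with $|\alpha|\le 1$ and $n\ge 0$ so that $\alpha T^n u\in V$, and I may again insist that $\alpha T^n u\ne 0$, which forces $0<|\alpha|\le 1$.

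To conclude, note that $\alpha u\in\alpha U$, while linearity of $T^n$ yields
\[
T^n(\alpha u)=\alpha T^n u\in V,
\]
so $T^n(\alpha U)\cap V\ne\emptyset$ and $T$ is disk transitive. The only delicate point is the strict inequality $0<|\alpha|$ in the definition of disk transitivity; this is managed by consistently choosing the approximants of points in $U$ and $V$ to be nonzero, which is harmless because every nonempty open subset of a nontrivial Hilbert space contains a nonzero point. Everything else is a direct consequence of the density of the two disk orbits produced along the way.
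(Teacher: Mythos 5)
Your proof is correct and follows essentially the same route as the paper: both arguments use the density of diskcyclic vectors (via Corollary \ref{huge} and the preceding proposition) to place a diskcyclic vector of the form $\alpha_1 T^{n_1}x$ inside $U$ and then use the density of its own disk orbit to reach $V$. Your version is in fact slightly more careful than the paper's, since you explicitly justify why the second scalar can be taken nonzero and why the resulting scalar in the transitivity condition has modulus in $(0,1]$, a point the paper passes over with ``one can find $\lambda$ with $|\lambda|\le|\alpha|$ and $n\ge N$.''
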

\begin{proof}
Let $T$ be a diskcyclic operator. Then, by the previous corollary, it is clear that $\D C(T)$ is a dense set. Assume that $U$ and $V$ are two open sets. Then there exist an $\al \in \co;\, |\al|\le 1$ and a non-negative integer $N$ such that $\al T^Nx\in U$. Also one can find $\lm \in \co,\, |\lm|\le |\al| $ and $n \ge N,$ such that $\lm T^nx\in V$. Thus $(\lm/\al)T^{n-N}U\cap V\neq \phi$.
\end{proof}
\begin{proposition} \label{cap}
Every disk transitive operator is diskcyclic and
$$\displaystyle{\D C(T)=\bigcap_k\left(\bigcup_{\stackrel{\lm \in \co}{|\lm|\geq1}}\bigcup_nT^{-n}(\lm B_k)\right)}$$
 is a dense $G_\delta $ set, where $\{B_k\}$ is a countable open basis for $\h$.
\end{proposition}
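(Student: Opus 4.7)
The plan is to imitate the Birkhoff transitivity argument familiar from hypercyclicity, adapted to the disk-orbit setting.

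First I would rewrite the condition $x\in\D C(T)$ in a form suited to Baire category. Since $\{B_k\}$ is a countable open basis, $x$ is diskcyclic for $T$ if and only if for every $k$ there exist $n\ge 0$ and $\al\in\co$ with $0<|\al|\le 1$ such that $\al T^n x\in B_k$ (the case $\al=0$ can be excluded because each $B_k$ can be chosen not to contain $0$, or one shrinks $B_k$ slightly). Setting $\lm=1/\al$, this reads $T^n x\in \lm B_k$ for some $|\lm|\ge 1$, i.e.\ $x\in T^{-n}(\lm B_k)$. Intersecting over $k$ yields the announced formula
\[
\D C(T)=\bigcap_k\bigg(\bigcup_{|\lm|\ge 1}\bigcup_n T^{-n}(\lm B_k)\bigg).
\]

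Next I would verify that each set $E_k:=\bigcup_{|\lm|\ge 1}\bigcup_n T^{-n}(\lm B_k)$ is open: for every $\lm\neq 0$ the dilation $\lm B_k$ is open, and $T^{-n}$ of an open set is open by continuity of $T$, so $E_k$ is a union of open sets. To see that $E_k$ is also dense, I would take an arbitrary nonempty open set $U\subseteq\h$ and apply disk transitivity (Proposition \ref{DT}) to the pair $(U,B_k)$: there exist $\al\in\co$ with $0<|\al|\le 1$ and $n\ge 0$ such that $T^n(\al U)\cap B_k\neq\emptyset$. Hence some $u\in U$ satisfies $\al T^n u\in B_k$, and putting $\lm=1/\al$ gives $u\in U\cap T^{-n}(\lm B_k)\subseteq U\cap E_k$. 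Thus $E_k$ meets every nonempty open set, so $E_k$ is open and dense.

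Finally, since $\h$ is a complete metric space, the Baire category theorem ensures that $\bigcap_k E_k$ is a dense $G_\delta$ subset of $\h$. In particular it is nonempty, so $T$ possesses a diskcyclic vector and belongs to $\D C(\h)$. The main (and essentially only) delicate point is the first step: making sure that the inequality $|\al|\le 1$ for the orbit translates cleanly into $|\lm|\ge 1$ for preimages, which requires handling the value $\al=0$ by observing that the condition $\al T^n x\in B_k$ with $\al=0$ is useful only when $0\in B_k$, and such basis elements can be replaced by slightly perturbed ones without changing the topology; everything else is the standard transitivity$\Rightarrow$Baire argument.
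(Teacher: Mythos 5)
Your proposal is correct and follows essentially the same route as the paper: rewrite $x\in\D C(T)$ via the basis $\{B_k\}$ as membership in $\bigcap_k E_k$ with $E_k=\bigcup_{|\lm|\ge1}\bigcup_n T^{-n}(\lm B_k)$ open, show each $E_k$ is dense using disk transitivity, and conclude by the Baire category theorem. You are in fact slightly more careful than the paper about the harmless $\al=0$ case in the translation $\lm=1/\al$; otherwise the two arguments coincide.
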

\begin{proof}
We have $x \in \D C(T)$ if and only if $\{\alpha T^nx:n\geq 0, |\alpha|\leq 1\}$ is dense in $\h$ if and only if for each $k>0$, there exist $\alpha \in \co; |\alpha|\leq 1,$ and $n \in \N$ such that $\alpha T^nx \in B_k$ if and only if $\displaystyle{x\in \bigcap_k\left(\bigcup_{\stackrel{\lm \in \co}{|\lm|\geq1}}\bigcup_nT^{-n}(\lm B_k)\right)}$.\\if and only if
$$\displaystyle{\D C(T)=\bigcap_k\left(\bigcup_{\stackrel{\lm\in \co}{|\lm|\geq 1}}\bigcup_nT^{-n}(\lm B_k)\right)}.$$
Since $\D C(T)$ can be written as a countable intersection of open sets, then $\D C(T)$ is a $G_\delta$ set. Moreover, it follows from the Baire category theorem that $\D C(T)$ is dense if and only if  each open set $A_k=\bigcup_{\stackrel{\lm \in \co}{|\lm|\geq1}}\bigcup_nT^{-n}(\lm B_k)$ is dense; i.e, if and only if for each non-empty open set $U$ and any $k\in \N$ one can find $n$ and $\lm\in \co; |\lm|\ge 1$ such that
$$U\cap T^{-n}(\lm B_k)\neq\phi \quad \mbox{i.e} \quad \frac{1}{\lm}T^nU\cap B_k\neq\phi$$
Since $\{B_k\}$ is a countable open basis for $\h$, this is equivalent to the disk transitivity of $T$.
\end{proof}
\begin{corollary}
Every vector in $\h$ can be written as a sum of two diskcyclic vectors for a diskcyclic operator $T$.
\end{corollary}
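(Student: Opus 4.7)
The plan is to use Proposition \ref{cap}, which tells us that for any diskcyclic operator $T$, the set $\D C(T)$ of diskcyclic vectors is a dense $G_\delta$ subset of $\h$. From here the argument is a standard Baire category trick: I want to show that for any fixed $y \in \h$, the two sets $\D C(T)$ and $y - \D C(T) := \set{y - x : x \in \D C(T)}$ have nonempty intersection, because then I can pick $x_1$ in that intersection and set $x_2 = y - x_1$, yielding $y = x_1 + x_2$ with both $x_1, x_2 \in \D C(T)$.

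First I would observe that the translation map $x \mapsto y - x$ is a homeomorphism of $\h$ onto itself. Since $\D C(T)$ is a dense $G_\delta$ set by Proposition \ref{cap}, its image under this homeomorphism, namely $y - \D C(T)$, is also a dense $G_\delta$ subset of $\h$. Next I would invoke the Baire category theorem in the complete metric space $\h$: the intersection of two dense $G_\delta$ sets is again a dense $G_\delta$ set, and in particular is nonempty. Therefore $\D C(T) \cap (y - \D C(T)) \neq \emptyset$.

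Finally I would pick any $x_1$ in this intersection. By construction $x_1 \in \D C(T)$, and the condition $x_1 \in y - \D C(T)$ means there exists $x_2 \in \D C(T)$ with $x_1 = y - x_2$, i.e. $y = x_1 + x_2$ with both summands diskcyclic for $T$. There is really no serious obstacle here; the only thing to be careful about is ensuring $\h$ is a complete metric space (which it is, being Hilbert) so that the Baire category theorem applies, and that the translation homeomorphism preserves the $G_\delta$ and density properties, which is immediate.
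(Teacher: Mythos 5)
Your proof is correct and is precisely the standard Baire-category argument the paper intends (the corollary is stated without proof there, following immediately from the dense $G_\delta$ property of $\D C(T)$ in Proposition \ref{cap}). Writing $y = x_1 + (y - x_1)$ with $x_1 \in \D C(T) \cap (y - \D C(T))$, and noting that $y - \D C(T)$ is again a dense $G_\delta$ set since $x \mapsto y - x$ is a homeomorphism, is exactly the right route.
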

The following proposition gives some equivalent assertions to diskcyclicity.
\begin{proposition}\label{d.t.}
Let $T\in\BH$. The following statements are equivalent.
\begin{enumerate}
  \item \label{a1} $T\in\D C(\h)$.
	\item \label{b1} $T$ is disk transitive.
  \item \label{c1} For each $x,y\in\h$, there exist sequences $\{x_k\}$ in $\h$, $\{n_k\}$ in $\N$, and $\{\al_k\}$ in $\co$;
  $0<\abs{\al_k}\le 1$ such that $x_k\to x$ and $T^{n_k}\al_kx_k \to y$.
  \item \label{d1} For each $x,y\in\h$ and each neighborhood $W$ of
  the zero in $\h$, there exist $z\in\h$, $n\in\N$, and $\al\in\co$;
  $0<\abs{\al}\le 1$ such that $x-z\in W$ and $T^n\al z-y\in W$.
\end{enumerate}
\end{proposition}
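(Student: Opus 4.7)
The plan is to set up a cyclic chain of implications (1) $\Rightarrow$ (2) $\Rightarrow$ (3) $\Rightarrow$ (4) $\Rightarrow$ (2) $\Rightarrow$ (1), piggy-backing on the earlier propositions so that we never actually have to re-prove the hard step. The equivalence (1) $\Leftrightarrow$ (2) is already essentially done: Proposition \ref{DT} gives the forward direction and Proposition \ref{cap} gives the converse (via the $G_\delta$ characterisation). So the new content is just the equivalence of (2), (3) and (4) among themselves, and all three statements are really just the disk-transitivity condition expressed in three different topological languages (open sets, sequences, neighborhoods of zero). The key technical remark is that, in a metric/Hilbert space, every neighborhood of a point $x$ contains a ball $B(x,r)$ and every neighborhood of $0$ contains a ball $B(0,r)$, so the three languages translate into each other routinely.

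For (2) $\Rightarrow$ (3) I would take $U_k = B(x,1/k)$ and $V_k = B(y,1/k)$, apply disk transitivity to get $\alpha_k \in \co$ with $0 < |\alpha_k| \le 1$ and $n_k \in \N$ and a point $x_k \in U_k$ with $T^{n_k}(\alpha_k x_k) \in V_k$; then $x_k \to x$ and $T^{n_k}\alpha_k x_k \to y$, as required. For (3) $\Rightarrow$ (4) I would pick $r>0$ with $B(0,r) \subseteq W$, apply (3) to get the sequences, and then for $k$ large enough simply set $z = x_k$, $n = n_k$, $\alpha = \alpha_k$; both required inclusions become triviaI norm estimates. For (4) $\Rightarrow$ (2) I would take arbitrary nonempty open $U,V$, pick $x\in U$, $y\in V$, choose $r>0$ with $B(x,r)\subseteq U$ and $B(y,r) \subseteq V$, and apply (4) with $W = B(0,r)$; the point $z$ delivered by (4) then lies in $U$ with $T^n(\alpha z) \in V$, so $T^n(\alpha U) \cap V \neq \emptyset$ and $0 < |\alpha| \le 1$, closing the loop.

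The only point to watch carefully is the quantifier on $\alpha$: Proposition \ref{DT} and Proposition \ref{cap} use $|\alpha| \le 1$ (and, for the transitive version, $0 < |\alpha| \le 1$), whereas statements (3) and (4) insist on $0 < |\alpha_k| \le 1$. This is not actually an obstacle, because disk transitivity as defined in the paper already explicitly excludes $\alpha = 0$, so the $\alpha_k$ produced by (2) are automatically nonzero, and the $\alpha$ we feed into (2) from (4) is nonzero by hypothesis. Hence no separate argument is needed to rule out the degenerate case. The whole proposition is really a three-line bookkeeping exercise once Propositions \ref{DT} and \ref{cap} are in hand, and I do not foresee a genuine obstacle.
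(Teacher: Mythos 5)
Your proposal is correct and follows essentially the same route as the paper: (1)$\Leftrightarrow$(2) via Propositions \ref{DT} and \ref{cap}, then (2)$\Rightarrow$(3) with balls of radius $1/k$, (3)$\Rightarrow$(4) by taking $z=x_k$ for large $k$, and (4)$\Rightarrow$(2) by feeding a small ball around zero back in. If anything, your version is slightly more careful than the paper's, since you explicitly choose $W=B(0,r)$ small enough that $B(x,r)\subseteq U$ and $B(y,r)\subseteq V$, a point the paper glosses over.
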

\begin{proof}
\ref{a1} $\Leftrightarrow$ \ref{b1}: Follow from Proposition \ref{DT} and Proposition \ref{cap}.\\

\ref{b1} $\Rightarrow $ \ref{c1}: Let $x,y\in\h$, and let
 $B_k=\mathbb B(x,1/k)$, $B'_k=\mathbb B(y,1/k)$ for all $k\geq 1$. From part (\ref{b1})
there exist sequences $\set{n_k}$ in $\N$, $\set{\al_k}$ in $\co$;
$0<\abs{\al_k}\le1$ for all $k\ge1$ and $\set{x_k}$ in $\h$ such that
$x_k\in B_k$ and $T^{n_k}\al_k x_k\in B'_k$ for
all $k\ge 1$. Then $\norm{x_k-x}<1/k$ and $\norm{T^{n_k}\al_k
x_k-y}<1/k$ for all $k\ge1$.\\

\ref{c1} $\Rightarrow$ \ref{d1}: Follows immediately from part (\ref{c1}) by taking $z=x_k$ for a large enough $k\in \N$.\\

\ref{d1} $\Rightarrow$ \ref{b1}: Let $U$ and $V$ be two non-empty open subset of $\h$. Let $W$ be a neighborhood for zero, pick $x\in U$ and $y\in V$, so there exist $z\in\h$, $n\in\N$, $\al\in\co, \,|\al|\le 1$  such that $x-z\in W$ and $T^n\al z-y\in W$. It follows immediately that $z\in U$ and $T^n\al z\in V.$
 \end{proof}
\begin{proposition}[ Diskcyclicity criterion]\label{dc}
Let $T\in\BH$ with the following properties.
\begin{enumerate}
  \item \label{a6} There exist two dense sets $X$, $Y$ in $\h$ and right inverse
  of $T$ (not necessary bounded) $S$ such that  $S(Y)\subseteq Y$ and
  $TS=I_{Y}$.
  \item \label{b6} There is a sequence $\{n_k\}$ in $\N$ such that
\begin{enumerate}
 \item  $\lim_{k\to\infty}\norm{S^{n_k}y}=0$ for all
  $y\in Y$;
  \item $\lim_{k\to\infty}\norm{T^{n_k}x}\norm{S^{n_k}y}=0$ for all
  $x\in X$, $y\in Y$.
  \end{enumerate}
\end{enumerate}
Then $T\in\D C(\h)$.
\end{proposition}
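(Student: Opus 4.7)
The plan is to verify condition (\ref{d1}) of Proposition \ref{d.t.}, which is equivalent to $T$ being diskcyclic. Fix $x, y \in \h$ and an open neighborhood $W = \mathbb{B}(0,\eps)$ of the origin; the goal is to produce $z \in \h$, $n \in \N$, and $\al \in \co$ with $0 < |\al| \le 1$ such that $x - z \in W$ and $T^n\al z - y \in W$. Using density of $X$ and $Y$, first pick $x' \in X$ and $y' \in Y$ with $\norm{x - x'} < \eps/2$ and $\norm{y - y'} < \eps/2$, where $y'$ may be taken nonzero by density (the case $y' = y = 0$ reduces trivially to choosing $z$ close to $x$ with $\al$ small enough that $|\al|\,\norm{T^n x'}$ is negligible).

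The heart of the argument is to exploit the right inverse $S$ to decouple the two approximations. For each $k$, set
\[
z_k \;=\; x' + \al_k^{-1} S^{n_k} y'
\]
for a nonzero scalar $\al_k$ to be chosen. Since $S(Y) \subseteq Y$ and $TS = I_Y$, a short induction gives $T^{n_k} S^{n_k} y' = y'$, whence
\[
\al_k T^{n_k} z_k \;=\; \al_k T^{n_k} x' + y'.
\]
Thus if $\al_k$ can be chosen so that $|\al_k| \le 1$, $\norm{S^{n_k} y'}/|\al_k| \to 0$, and $|\al_k|\,\norm{T^{n_k} x'} \to 0$, then $z_k \to x'$ and $\al_k T^{n_k} z_k \to y'$; for $k$ large enough, $\norm{x - z_k} < \eps$ and $\norm{\al_k T^{n_k} z_k - y} < \eps$, delivering the triple $(z, n, \al) = (z_k, n_k, \al_k)$ required by (\ref{d1}).

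The main obstacle is to balance these three constraints on $|\al_k|$ simultaneously, and this is exactly what the two parts of hypothesis (\ref{b6}) are engineered for. Writing $a_k = \norm{T^{n_k} x'}$ and $b_k = \norm{S^{n_k} y'}$, condition (\ref{b6})(a) gives $b_k \to 0$ and condition (\ref{b6})(b) gives $a_k b_k \to 0$; moreover $b_k > 0$ for every $k$, since $b_k = 0$ would force $y' = T^{n_k} S^{n_k} y' = 0$, contradicting $y' \ne 0$. I propose the choice $|\al_k| = \sqrt{b_k/\max(a_k, 1)}$. Then $|\al_k| \le \sqrt{b_k} \to 0$, so $|\al_k| \le 1$ eventually, while a short case split on whether $a_k \ge 1$ yields both $|\al_k|\, a_k \le \sqrt{a_k b_k} + \sqrt{b_k} \to 0$ and $b_k/|\al_k| \le \sqrt{a_k b_k} + \sqrt{b_k} \to 0$. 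This furnishes the required triple $(z_k, n_k, \al_k)$ and completes the verification.
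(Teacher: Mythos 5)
Your proof is correct, and it is essentially the argument the paper itself has in mind: the paper omits the details, saying only that the proof ``follows the lines of'' the standard Hypercyclicity and Supercyclicity Criterion proofs in Bayart--Math\'eron, and your construction $z_k=x'+\al_k^{-1}S^{n_k}y'$ with a balancing choice of $\abs{\al_k}$ between $\norm{S^{n_k}y'}$ and $\norm{T^{n_k}x'}$ is exactly that standard argument, here correctly routed through condition (\ref{d1}) of Proposition \ref{d.t.}. The details you supply (positivity of $\norm{S^{n_k}y'}$ for $y'\neq 0$, the case split in the choice of $\abs{\al_k}$, and eventual smallness guaranteeing $0<\abs{\al_k}\le 1$) all check out.
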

\begin{proof} The proof follows the lines of the proof of \cite[Theorem 1.6]{dynamic}  and  \cite[Theorem 1.14]{dynamic}; therefore, we will skip it.
\end{proof}
The characterizations of the bilateral weighted shifts that are diskcyclic are shown in the following results.
\begin{theorem} \label{4.3.1}
Let $T$ be a bilateral forward weighted shift on the Hilbert space $\h=\LZ$ with the weight sequence
$\set{w_n}_{n\in\Z}$. Then the following statements are
equivalent.
\begin{enumerate}
  \item \label{a4} $T\in\D C(\h)$.
  \item \label{b4} For all $q\in\N$,
\begin{enumerate}
  \item  \label{d4} $\displaystyle\limsup_{n \to \infty}\min_q\set{\prod_{k=h-n}^{h-1}w_k:\abs{h}\le
  q}=\infty$.
  \item  $\displaystyle\liminf_{n\to\infty}\max_q\set{\frac{\prod_{k=j}^{j+n-1}w_k}
      {\prod_{k=h-n}^{h-1}w_k}:\abs{h},\abs{j}\le
  q}=0$.
\end{enumerate}
  \item \label{c4} $T$ satisfies the diskcyclicity criterion.
\end{enumerate}
\end{theorem}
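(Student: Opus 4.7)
I would prove the cycle \ref{c4}$\Rightarrow$\ref{a4}$\Rightarrow$\ref{b4}$\Rightarrow$\ref{c4}. Throughout, let $\{e_h\}_{h\in\Z}$ denote the canonical orthonormal basis of $\LZ$ and let $S$ be the formal left inverse of $T$ given by $Se_h=w_{h-1}^{-1}e_{h-1}$, which is well-defined on the dense subspace $Y_0=\mathrm{span}\{e_h:h\in\Z\}$ and satisfies $TS=I$ there. A direct computation gives
\[
\norm{T^n e_j}=\prod_{k=j}^{j+n-1}\abs{w_k},\qquad \norm{S^n e_h}=\Big(\prod_{k=h-n}^{h-1}\abs{w_k}\Big)^{-1},
\]
so condition (\ref{d4}) in \ref{b4} governs $\norm{S^n e_h}\to 0$ and the second condition of \ref{b4} governs $\norm{T^n e_j}\norm{S^n e_h}\to 0$. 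The implication \ref{c4}$\Rightarrow$\ref{a4} is then immediate from Proposition \ref{dc}.

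For \ref{b4}$\Rightarrow$\ref{c4}, I would take $X=Y=Y_0$ and check the hypotheses of the diskcyclicity criterion with the above $S$. Since both conditions in \ref{b4} are uniform in $|h|,|j|\le q$, for each fixed $q$ one can extract a common subsequence of integers along which $\norm{S^n e_h}\to 0$ and $\norm{T^n e_j}\norm{S^n e_h}\to 0$ simultaneously for all $|h|,|j|\le q$; a diagonal argument across $q\in\N$ then yields one sequence $\{n_k\}$ witnessing the criterion against every pair of basis vectors, and linearity lifts the estimates to arbitrary elements of $X$ and $Y$.

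The real content is \ref{a4}$\Rightarrow$\ref{b4}, which I would establish via the disk-transitivity form in Proposition \ref{d.t.}(\ref{b1}). Fix $q\in\N$ and a small $\eps>0$, set $u=v=\sum_{|h|\le q}e_h$, and apply disk transitivity to $\mathbb{B}(u,\eps)$ and $\mathbb{B}(v,\eps)$ to obtain $z\in\LZ$, $n\in\N$ and $\al\in\co$ with $0<|\al|\le 1$ such that $\norm{z-u}<\eps$ and $\norm{\al T^n z-v}<\eps$; iterating with $\eps\to 0$ and passing to a subsequence forces $n\to\infty$, in particular $n>2q$. For such $n$, the index sets $\{-q,\dots,q\}$, $\{h-n:|h|\le q\}$ and $\{j+n:|j|\le q\}$ are pairwise disjoint, so comparing basis coefficients of $\al T^n z$ against $v$ yields, for each $|h|\le q$, the identity $\al\iner{z,e_{h-n}}\prod_{k=h-n}^{h-1}w_k\approx 1$ with $|\iner{z,e_{h-n}}|=|\iner{z-u,e_{h-n}}|<\eps$; this gives $|\al|\prod_{k=h-n}^{h-1}\abs{w_k}\gtrsim 1/\eps$, and combined with $|\al|\le 1$ it yields (\ref{d4}) as $\eps\to 0$. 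Symmetrically, for $|j|\le q$ the $e_{j+n}$-coefficient of $\al T^n z$ must lie within $\eps$ of zero while $|\iner{z,e_j}|\approx 1$, giving $|\al|\prod_{k=j}^{j+n-1}\abs{w_k}\lesssim\eps$; dividing the two bounds cancels $|\al|$ and delivers the second condition of \ref{b4}. The main obstacle is the bookkeeping: a single application of disk transitivity delivers only one pair $(\al,n)$, yet \ref{b4} demands uniformity across all $|h|,|j|\le q$ and for infinitely many $n$. This is handled by the ``block'' choice $u=v=\sum_{|h|\le q}e_h$, so that one disk-transitive application witnesses every index simultaneously, and by exploiting the constraint $|\al|\le 1$ --- which has no analogue in Salas's supercyclic argument --- that is precisely what converts the coefficient estimate into the standalone growth statement (\ref{d4}).
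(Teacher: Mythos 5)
Your proposal follows essentially the same route as the paper: \ref{c4}$\Rightarrow$\ref{a4} via Proposition \ref{dc}, \ref{b4}$\Rightarrow$\ref{c4} by testing the criterion on the linear span of the basis with exactly the norm computations the paper uses, and \ref{a4}$\Rightarrow$\ref{b4} by the Salas-type coefficient-comparison argument that the paper only cites (``similar to [Theorem 3.1]'' of Salas), with the same key observation that the constraint $|\al|\le 1$ is what produces the extra condition (\ref{d4}). You in fact supply more detail than the paper does, and your coefficient estimates are correct. One step needs repair: because you take $u=v=\sum_{|h|\le q}e_h$, the bare disk-transitivity statement applied to $\mathbb{B}(u,\eps)$ and itself is trivially satisfied by $n=0$, $\al=1$, $z=u$, so ``iterating with $\eps\to 0$ and passing to a subsequence forces $n\to\infty$'' does not follow as written. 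The standard fix stays inside the paper's toolkit: work directly with a diskcyclic vector $x$, note that for fixed $N$ the set $\set{\al T^nx: |\al|\le 1,\ n\le N}$ is a finite union of two-real-dimensional disks and hence nowhere dense in $\LZ$, so the dense set $\D Orb(T,x)$ must enter every ball with arbitrarily large powers; then choose $z=\al_1T^{n_1}x$ near $u$ and $\al_2T^{n_2}x$ near $v$ with $n_2-n_1>2q$ and $|\al_2|\le|\al_1|$ (arranged exactly as in the proof of Proposition \ref{DT}), and run your coefficient comparison with $\al=\al_2/\al_1$. With that adjustment the argument is complete and matches the paper's intended proof.
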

\begin{proof}
\ref{a4} $\Rightarrow$ \ref{b4}: The proof is similar to \cite[Theorem 3.1]{Sup & weig} and observe that the condition (\ref{d4})  holds from the fact that $|\al|\le1$.\\
\ref{b4} $\Rightarrow$ \ref{c4}: Let $X=Y$ be the manifold spanned by $\set{e_n}_{n\in \Z}$ and let $x=\sum_{|j|\le q}x_je_j$ and $y=\sum_{|j|\le q}y_je_j$. Assume that $B$ is the right inverse of $T$, then
$$Be_n=\frac{1}{w_{n-1}}e_{n-1},$$
 and
$$\norm{T^nx}\le\max_{q}\set{\prod_{k=j}^{j+n-1}w_k:\abs{j}\le q}\norm{x};$$
$$\norm{B^ny}\le\frac{\norm{y}}{\min_{q}\set{\prod_{k=h-n}^{h-1}w_k:\abs{h}\le q}}.$$
Thus
\[\norm{T^nx}\norm{B^ny}\le\max_{q}\set{\frac{\prod^{j+n-1}_{k=j}w_k}{\prod_{k=h-n}^{h-1}w_k}:\abs{j},\abs{h}\le q}\norm{x}\norm{y}.\]
Let $\eps>0$ and $q\in \N$. Assume there exists a positive integer $n>2q$ satisfies $\left({\prod^{j+n-1}_{k=j}w_k}\right)/\left({\prod_{k=h-n}^{h-1}w_k}\right)<\eps$  and $\prod_{k=h-n}^{h-1}w_k> {1}/{\eps}$ for all $|j|,|h|\le q$. Then
\[\lim_{n\to\infty}\norm{T^{n}x}\norm{B^{n}y}=0\qquad\text{and}\qquad
\lim_{n\to\infty}\norm{B^{n}y}=0.\]
\ref{c4} $\Rightarrow$ \ref{a4}: Follows from Proposition \ref{dc}.
\end{proof}
\begin{corollary}\label{4.3.2}
Let $T\in \D C(\LZ)$ be a forward weighted shift with weight
sequence $\set{w_n}_{n\in\Z}$. Then there is a sequence
$\set{n_r}$ in $\N$ such that
\begin{enumerate}
  \item $\displaystyle\lim_{r\to\infty}\brc{\prod_{k=1}^{n_r}\frac{1}{w_{-k}}}=0$,
  \item $\displaystyle\lim_{r\to\infty}\brc{\prod_{k=1}^{n_r}w_{k}}
  \brc{\prod_{k=1}^{n_r}\frac{1}{w_{-k}}}=0$.
\end{enumerate}
\end{corollary}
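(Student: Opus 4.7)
The plan is to derive both limits as an immediate consequence of Theorem \ref{4.3.1}, via the diskcyclicity criterion (Proposition \ref{dc}), evaluated at the standard basis vectors. Since $T\in\D C(\LZ)$, by the equivalence (\ref{a4}) $\Leftrightarrow$ (\ref{c4}) of Theorem \ref{4.3.1}, $T$ satisfies the diskcyclicity criterion. Inspecting the proof of (\ref{b4}) $\Rightarrow$ (\ref{c4}) in that theorem, the dense subspaces furnished by the criterion are $X=Y=\mathrm{span}\set{e_n:n\in\Z}$, and the right inverse $S$ is the backward shift $B$ determined by $Be_n=w_{n-1}^{-1}e_{n-1}$. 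The criterion therefore produces a single sequence $\set{n_r}$ in $\N$ such that
\[
\lim_{r\to\iy}\norm{B^{n_r}y}=0\qquad\text{and}\qquad \lim_{r\to\iy}\norm{T^{n_r}x}\norm{B^{n_r}y}=0
\]
for every $x,y\in Y$.

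Next, I would specialize to concrete basis vectors. A straightforward induction using $Be_n=w_{n-1}^{-1}e_{n-1}$ and $Te_n=w_ne_{n+1}$ yields
\[
B^{n_r}e_0=\brc{\prod_{k=1}^{n_r}\frac{1}{w_{-k}}}e_{-n_r},\qquad T^{n_r}e_1=\brc{\prod_{k=1}^{n_r}w_k}e_{n_r+1}.
\]
Taking $y=e_0$ in the first limit of the criterion immediately produces part (1) of the corollary, and taking $x=e_1$, $y=e_0$ in the second limit produces part (2) (assuming, as is standard for weighted shifts, positive weights; otherwise one reads the same statements with $|w_k|$ in place of $w_k$, which can be absorbed into a unitary conjugation).

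The only issue that could be a real obstacle is the simultaneity of the two limits along one common sequence $\set{n_r}$; if one tried to read the conclusion directly off conditions (\ref{d4}) and its companion in Theorem \ref{4.3.1}, one would have to argue that a subsequence on which the liminf is attained also sends the relevant products to infinity, which is not automatic. Routing the argument through the diskcyclicity criterion bypasses this issue entirely, since the criterion is stated with a single sequence. With that, the proof reduces to the two basis-vector computations above.
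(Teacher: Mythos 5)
Your proof is correct, but it takes a genuinely different route from the paper's. The paper proves the corollary directly from condition (\ref{b4}) of Theorem \ref{4.3.1}: it sets $q=0$ (so $j=h=0$), reads off $\limsup_{n}\prod_{k=-n}^{-1}w_k=\infty$ and $\liminf_{n}\brc{\prod_{k=0}^{n-1}w_k}\brc{\prod_{k=1}^{n}1/w_{-k}}=0$, asserts that a single sequence $\set{n_r}$ realizes both limits, and then performs an index-shift computation (multiplying by $w_{n_r}/w_0$ and using that the left factor is bounded by $\eps\abs{w_0/w_{n_r}}$ eventually) to convert $\prod_{k=0}^{n_r-1}w_k$ into $\prod_{k=1}^{n_r}w_k$. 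Your route through condition (\ref{c4}) and the basis-vector identities $B^{n_r}e_0=\brc{\prod_{k=1}^{n_r}1/w_{-k}}e_{-n_r}$ and $T^{n_r}e_1=\brc{\prod_{k=1}^{n_r}w_k}e_{n_r+1}$ buys two things: the criterion hands you one common sequence for both limits, so the simultaneity issue you correctly identify in the $\limsup$/$\liminf$ formulation never arises (notably, the paper's own ``Thus, there is a sequence $\set{n_r}$\dots'' step silently elides exactly this point); and the choice $x=e_1$ rather than $e_0$ produces the product $\prod_{k=1}^{n_r}w_k$ with the correct indices at once, making the paper's $w_{n_r}/w_0$ manipulation unnecessary. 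The one soft spot in your argument is that statement (\ref{c4}) of Theorem \ref{4.3.1} does not by itself specify the dense sets or the right inverse, so you must, as you note, read them off from the proof of (\ref{b4}) $\Rightarrow$ (\ref{c4}); this is legitimate but means your corollary is really a corollary of that proof rather than of the theorem's statement. What the paper's approach buys in exchange is independence from the internal details of the criterion's verification.
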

\begin{proof}
By Theorem \ref{4.3.1}, take $q=0$, then $j=h=0$. Hence
\[\limsup_{n\to\infty}\brc{\prod_{k=-n}^{-1}w_k}=\infty\] and
\[\liminf_{n\to\infty}\brc{\prod_{k=0}^{n-1}w_k}\brc{\prod_{k=1}^{n}\frac{1}{w_{-k}}}=0.\]
Thus, there is a sequence $\set{n_r}$ in $\N$ such that
\[\lim_{r\to\infty}\brc{\prod_{k=1}^{n_r}\frac{1}{w_{-k}}}=0\] and
\[\lim_{r\to\infty}\brc{\prod_{k=0}^{n_r-1}w_{k}}\brc{\prod_{k=1}^{n_r}\frac{1}{w_{-k}}}=0.\]
Let $\eps>0$, then there exists a positive integer $m>0$ such that for all $r>m$
\[\abs{\brc{\prod_{k=0}^{n_r-1}w_{k}}\brc{\prod_{k=1}^{n_r}
\frac{1}{w_{-k}}}}<\eps\abs{\frac{w_0}{w_{n_r}}}.\] Hence
\[\abs{\brc{\prod_{k=1}^{n_r}w_{k}}\brc{\prod_{k=1}^{n_r}\frac{1}{w_{-k}}}}=
\abs{\brc{\prod_{k=0}^{n_r-1}w_{k}}\brc{\prod_{k=1}^{n_r}\frac{1}{w_{-k}}}}
 \abs{\frac{w_{n_r}}{w_0}}<\eps.\]
Therefore
\[\lim_{r\to\infty}\brc{\prod_{k=1}^{n_r}\frac{1}{w_{-k}}}=0\quad\text{and}\quad
\lim_{r\to\infty}\brc{\prod_{k=1}^{n_r}w_k}
  \brc{\prod_{k=1}^{n_r}\frac{1}{w_{-k}}}=0.\]
\end{proof}
\begin{proposition}\label{4.3.4}
Let $T:\LZ\to\LZ$ be an invertible forward weighted shift with
weight sequence $\set{w_n}_{n\in\Z}$. Then $T\in\D C(\LZ)$ if and only if
there exists a sequence $\set{n_r}$ in $\N$ such that
\begin{enumerate}
  \item $\displaystyle\lim_{r\to\infty}\prod_{k=1}^{n_r}\frac{1}{w_{-k}}=0$;
  \item   $\displaystyle\lim_{r\to\infty}\brc{\prod_{k=1}^{n_r}w_{k}}
 \brc{\prod_{k=1}^{n_r}\frac{1}{w_{-k}}}=0$.
\end{enumerate}
\end{proposition}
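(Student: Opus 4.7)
The plan is to establish the two directions using results already developed in the excerpt. The forward implication ($\Rightarrow$) is immediate: Corollary \ref{4.3.2} asserts exactly that a diskcyclic forward weighted shift on $\LZ$ admits a sequence $\set{n_r}$ satisfying (1) and (2), with no invertibility hypothesis required. Hence only the backward implication needs genuine work.

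For ($\Leftarrow$), I would apply Theorem \ref{4.3.1} by verifying condition (\ref{b4}) for every $q\in\N$ along the sequence $\set{n_r}$ provided by the hypothesis. The key observation is that invertibility of $T$ on $\LZ$ forces the weight sequence to be bounded both above and below, say $0<c\le\abs{w_n}\le C$ for all $n\in\Z$. This uniform two-sided bound lets one compare products over arbitrary shifted windows of length $n$ with the central product: for $\abs{h}\le q$ and $n>q$, the product $\prod_{k=h-n}^{h-1}w_k$ differs from $\prod_{k=-n}^{-1}w_k=\prod_{k=1}^{n}w_{-k}$ by inserting and removing at most $2q$ factors, each lying in $[c,C]$. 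So there is a constant $M_q$ depending only on $q$ with
$$ M_q^{-1}\prod_{k=1}^{n}w_{-k}\le\prod_{k=h-n}^{h-1}w_k\le M_q\prod_{k=1}^{n}w_{-k}, $$
and analogously $M_q^{-1}\prod_{k=1}^{n}w_k\le\prod_{k=j}^{j+n-1}w_k\le M_q\prod_{k=1}^{n}w_k$ for $\abs{j}\le q$.

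Plugging in the sequence $\set{n_r}$ provided by the hypothesis: condition (1) is equivalent to $\prod_{k=1}^{n_r}w_{-k}\to\infty$, which through the lower bound above gives $\min_q\set{\prod_{k=h-n_r}^{h-1}w_k:\abs{h}\le q}\to\infty$, establishing condition (\ref{d4}) of Theorem \ref{4.3.1}. Combining both comparisons yields
$$ \max_{\abs{j},\abs{h}\le q}\frac{\prod_{k=j}^{j+n_r-1}w_k}{\prod_{k=h-n_r}^{h-1}w_k}\le M_q^2\brc{\prod_{k=1}^{n_r}w_k}\brc{\prod_{k=1}^{n_r}\frac{1}{w_{-k}}}, $$
which tends to $0$ by hypothesis (2), giving the remaining half of Theorem \ref{4.3.1}(\ref{b4}). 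Thus $T\in\D C(\LZ)$.

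There is no deep obstacle; the only care needed is the index bookkeeping for the two-sided comparison of shifted products with the central one. This is also precisely where invertibility is used essentially: without a positive lower bound on $\abs{w_n}$, the ``removed'' factors in a shifted window could contribute arbitrarily large quotients and the reduction of the ``$\min_q$'' and ``$\max_q$'' conditions to the single-sequence conditions (1)--(2) would break down.
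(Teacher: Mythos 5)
Your proof is correct, and the forward direction coincides with the paper's (both simply invoke Corollary \ref{4.3.2}). For the converse, however, you take a genuinely different route. The paper verifies the diskcyclicity criterion (Proposition \ref{dc}) directly: it takes $X=Y$ to be the finitely supported vectors, lets $B=T^{-1}$, and reduces the verification to the single pair $x=e_1$, $y=e_0$ by citing Lemmas 3.1 and 3.3 of Feldman's paper on invertible bilateral shifts --- so the role of invertibility is entirely hidden inside those external reduction lemmas. You instead verify condition (\ref{b4}) of Theorem \ref{4.3.1} for every $q$, using the two-sided bound $0<c\le w_n\le C$ (lower bound from invertibility, upper bound from boundedness of $T$) to compare each shifted window product $\prod_{k=h-n}^{h-1}w_k$ or $\prod_{k=j}^{j+n-1}w_k$ with the corresponding central product up to a constant $M_q$ depending only on $q$; conditions (1) and (2) along $\set{n_r}$ then yield both halves of (\ref{b4}) along the same sequence. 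Your version is more self-contained relative to this paper (it leans only on Theorem \ref{4.3.1}, already established here, rather than on the external lemmas) and makes explicit exactly where invertibility is needed; the cost is the index bookkeeping for the window comparison, which you handle correctly. The paper's version is shorter but delegates the real content of the reduction to the cited lemmas. Both arguments are sound.
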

\begin{proof}
The first part follows from Corollary \ref{4.3.2}.\\
Conversely, we will verify the
diskcyclicity criterion. Let \[X=Y=\set{x\in\LZ: x\,\,
\text{has only finitely many non--zero coordinates}},\] and let $B$ be the inverse of $T$.
It is sufficient, by linearity, the triangle inequality, \cite[Lemma 3.1]{H C Bi.} and \cite[Lemma 3.3]{H C Bi.} to suppose that $x=e_1$ and $y=e_0$.
Since $\lim_{r\to\infty}\norm{B^{n_r}e_{0}}=\lim_{r\to\infty}\prod_{k=1}^{n_r}\left({1}/{w_{-k}}\right)=0$
and \[\lim_{r\to\infty}\norm{T^{n_r}e_{1}}\norm{B^{n_r}e_{0}}=
\lim_{r\to\infty}\brc{\prod_{k=1}^{n_r}w_{k}}\brc{\prod_{k=1}^{n_r}\frac{1}{w_{-k}}}=0,\]
Then $T$ is diskcyclic.
\end{proof}
 We should note that a bilateral weighted shift operator is invertible if and only if there is a positive real number $m$ such that $|w_n|\geq m$ for all $n\in \Z$.
The following corollary shows that Proposition \ref{4.3.4} still holds for some further general cases.
\begin{corollary}\label{bif d}
Let $T:\LZ\to\LZ$ be a bilateral forward weighted shift with the weight
sequence $\set{w_n}_{n\in\Z}$ and assume that there exists a positive integer $m > 0$ such that $w_n \ge m$ for all $n < 0$ (or for all $n > 0$). Then $T\in\D C(\LZ)$ if and only if there
is a sequence  $\set{n_r}$ in $\N$ such that
\begin{enumerate}
  \item \label{a2}  $\displaystyle\lim_{r\to\infty}\prod_{k=1}^{n_r}\frac{1}{w_{-k}}=0$;
  \item  \label{b2} $\displaystyle\lim_{r\to\infty}\brc{\prod_{k=1}^{n_r}w_{k}}\brc{\prod_{k=1}^{n_r}\frac{1}{w_{-k}}}=0$.
\end{enumerate}
\end{corollary}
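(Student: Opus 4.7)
My plan is to split into necessity and sufficiency. For necessity, I simply invoke Corollary \ref{4.3.2}, whose conclusion is exactly (\ref{a2}) and (\ref{b2}) and which makes no use of invertibility; so the ``only if'' direction requires no extra work.

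For sufficiency I follow the strategy of Proposition \ref{4.3.4} and verify the diskcyclicity criterion (Proposition \ref{dc}). I would take $X=Y$ to be the dense subspace of finite linear combinations of the basis $\{e_n\}_{n\in\Z}$, and define the algebraic right inverse $S$ of $T$ on $Y$ by $Se_n = e_{n-1}/w_{n-1}$, so that $S(Y)\subseteq Y$ and $TS = I_Y$; the only difference from Proposition \ref{4.3.4} is that $S$ is no longer defined as a bounded operator on all of $\LZ$. By linearity and the triangle inequality, the two limit conditions in the criterion reduce to bounding $\norm{T^{n_r}e_j}=\prod_{k=j}^{j+n_r-1}w_k$ and $\norm{S^{n_r}e_{j'}}=\prod_{k=1}^{n_r}w_{j'-k}^{-1}$ for each $j,j'\in\Z$. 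I would split each of these products into a constant-length ``boundary'' piece (a bounded number of factors around the endpoints) and a long ``bulk'' piece, and estimate the boundary factors by combining the operator bound $w_n\le\norm{T}$ with the one-sided lower bound $w_n\ge m$, to obtain uniform comparisons of the form $\norm{T^{n_r}e_j}\le C_j\prod_{k=1}^{n_r}w_k$ and $\norm{S^{n_r}e_{j'}}\le C_{j'}\prod_{k=1}^{n_r}w_{-k}^{-1}$. This is the analogue, in the non-invertible setting, of the reduction to $x=e_1$, $y=e_0$ used in Proposition \ref{4.3.4}.

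The hard part, I expect, is precisely this comparability. In one of the two one-sided cases the bounds go through directly, but in the other (namely, when the missing lower bound is on the weights involved in $S^{n_r}$) the tail factor $\prod_{k=n_r+1}^{n_r+J}w_{-k}^{-1}$ can in principle blow up, and one must first replace $\{n_r\}$ by a subsequence, chosen by a diagonal argument, so that the shifted products $\prod_{k=1}^{n_r+J}w_{-k}^{-1}$ still tend to zero for every fixed $J\ge 1$. Once the uniform comparisons are available, the given conditions (\ref{a2}) and (\ref{b2}) translate directly into $\norm{S^{n_r}e_{j'}}\to 0$ and $\norm{T^{n_r}e_j}\norm{S^{n_r}e_{j'}}\to 0$, which by linearity gives the diskcyclicity criterion and hence $T\in\D C(\LZ)$.
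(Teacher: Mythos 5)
Your skeleton is the one the paper intends: the ``only if'' direction is exactly Corollary \ref{4.3.2} (whose proof uses no invertibility), and for the ``if'' direction the paper likewise defers to a Feldman-style verification of the diskcyclicity criterion on the span of $\{e_n\}_{n\in\Z}$, reducing by linearity to the products $\norm{T^{n}e_j}=\prod_{k=j}^{j+n-1}w_k$ and $\norm{S^{n}e_{j'}}=\prod_{k=1}^{n}w_{j'-k}^{-1}$ — the paper simply skips all details. So the route is right, and your formulas are correct. The problem is that the step you yourself single out as the hard part is not repaired by the device you propose. Suppose the lower bound holds for $n>0$. For $j'=-J<0$ one has $\norm{S^{n}e_{-J}}=c_J\prod_{k=1}^{n+J}w_{-k}^{-1}$ with $c_J=\prod_{i=1}^{J}w_{-i}$, and it can happen that $\prod_{k=1}^{n_r+J}w_{-k}^{-1}\ge 1$ for \emph{every} $r$: just take $w_{-(n_r+1)}$ of the order of $\prod_{k=1}^{n_r}w_{-k}^{-1}$, which is compatible with $0<w_n\le\norm{T}$. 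No subsequence of $\{n_r\}$ can then restore convergence to $0$, so the diagonal extraction you describe cannot exist in general. (A second, smaller inaccuracy: it is not true that the other one-sided case ``goes through directly''. When the lower bound is instead on $n<0$, the $S$-products are harmless but $\norm{T^{n}e_j}$ for $j\le 0$ acquires the factor $\brc{\prod_{k=j+n}^{n}w_k}^{-1}$, a bounded number of \emph{positive-index} weights with no lower bound; each one-sided hypothesis leaves exactly one dangerous boundary product.)

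The correct repair is a \emph{translation} of the sequence, not a subsequence. For the block $|j|,|j'|\le q$ replace $n_r$ by $n_r-q$ (in the case $w_n\ge m$ for $n>0$; translate the other way in the other case). Since $w_n\le\norm{T}$ for all $n$, one has $\prod_{k=1}^{n_r-i}w_{-k}^{-1}\le\norm{T}^{\,i}\prod_{k=1}^{n_r}w_{-k}^{-1}$ for $0\le i\le 2q$, so every shifted $S$-product that occurs along $\{n_r-q\}$ is dominated by a constant times $\prod_{k=1}^{n_r}w_{-k}^{-1}\to 0$; and the bound $w_n\ge m$ for $n>0$ gives $\prod_{k=1}^{n_r-q}w_k\le m^{-q}\prod_{k=1}^{n_r}w_k$ and controls the right-hand boundary factors of $\norm{T^{n_r-q}e_j}$ for $j\le 0$, so the mixed products are dominated by a constant times $\brc{\prod_{k=1}^{n_r}w_{k}}\brc{\prod_{k=1}^{n_r}w_{-k}^{-1}}\to 0$. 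Since condition (2) of Theorem \ref{4.3.1} only asks, for each $q$ separately, that the relevant $\liminf$ and $\limsup$ be attained along \emph{some} sequence, this already finishes the proof (alternatively, diagonalize over $q$ to get a single sequence for the criterion). As written, your argument has a genuine gap at exactly this point.
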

\begin{proof}
The  proof of ``if " part follows from Theorem \ref{4.3.1} and it is similar to the proof of \cite[Theorem 4.1]{H C Bi.}; therefore, we will skip the proof.\\
The ``only if " part follows from Corollary \ref{4.3.2}.
\end{proof}
Since the bilateral weighted backward shifts are unitarily equivalent to the  bilateral weighted forward shifts, then the above results are extended to backward shift operators and their proofs can be proved by similar steps.
\begin{theorem} \label{1}
Let $T$ be a bilateral backward weighted shift on the Hilbert space $\h=\LZ$ with the  weight sequence
$\set{w_n}_{n\in\Z}$. Then the following statements are
equivalent.
\begin{enumerate}
  \item $T\in\D C(\h)$.
  \item For all $q\in\N$;
\begin{enumerate}
  \item  $\displaystyle\limsup_{n \to \infty}\min_q\set{\prod_{k=h+1}^{h+n}w_k:\abs{h}\le
  q}=\infty$;
  \item  $\displaystyle\liminf_{n\to\infty}\max_q\set{\frac{\prod_{k=j+1-n}^j w_k}{\prod_{k=h+1}^{h+n}w_k}:\abs{h},\abs{j}\le
  q}=0$.
\end{enumerate}
  \item $T$ satisfies the diskcyclicity criterion.
\end{enumerate}
\end{theorem}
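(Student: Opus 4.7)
The plan is to reduce Theorem \ref{1} to the forward-shift characterization Theorem \ref{4.3.1} via the unitary equivalence flagged in the paragraph preceding the statement. Define the flip $U:\LZ\to\LZ$ by $Ue_n=e_{-n}$; then $U$ is unitary with $U^*=U$. For the backward shift $T$ with $Te_n=w_ne_{n-1}$, a direct computation gives
$$UTU^*e_n=UTe_{-n}=U(w_{-n}e_{-n-1})=w_{-n}e_{n+1},$$
so $\tilde T:=UTU^*$ is the bilateral forward weighted shift with weight sequence $\tilde w_n:=w_{-n}$. Because $U$ is unitary (in particular has dense range), two applications of Proposition \ref{1.3} (once with $G=U$ intertwining $T$ and $\tilde T$ via $\tilde T U=UT$, once with $G=U^*$ in the reverse direction) yield $T\in\D C(\LZ)$ if and only if $\tilde T\in\D C(\LZ)$.

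With this reduction in hand, I would invoke the equivalence \ref{a4}$\Leftrightarrow$\ref{b4} of Theorem \ref{4.3.1} applied to $\tilde T$ and then rewrite the resulting conditions back in terms of $\{w_n\}$. Substituting $\tilde w_k=w_{-k}$ into $\prod_{k=h-n}^{h-1}\tilde w_k$ and performing the change of dummy index $m=-k$ produces $\prod_{m=1-h}^{n-h}w_m$; renaming $h\mapsto -h$, which preserves the constraint $\abs{h}\le q$, this becomes $\prod_{k=h+1}^{h+n}w_k$, matching the product in condition (a) of Theorem \ref{1}. The same bookkeeping applied to the numerator $\prod_{k=j}^{j+n-1}\tilde w_k$ yields $\prod_{k=j+1-n}^{j}w_k$, matching condition (b).

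For the equivalence with the diskcyclicity criterion, the clean route is to note that if $\tilde T$ satisfies the criterion with dense sets $X,Y$ and right inverse $\tilde S$, then $T$ satisfies it with the pulled-back data $U^*X$, $U^*Y$, $U^*\tilde SU$, since $U$ is an isometric bijection sending dense sets to dense sets and intertwining $T$ with $\tilde T$; the reverse implication is symmetric. Alternatively one can verify the criterion directly, paralleling the proof of Theorem \ref{4.3.1}: the natural right inverse of $T$ is $Se_n=w_{n+1}^{-1}e_{n+1}$, and computing $\norm{T^ne_j}$ and $\norm{S^ne_h}$ on basis vectors produces exactly the products appearing in (a) and (b), so the passage from (b) to the criterion is the same estimate as in Theorem \ref{4.3.1}, and the implication from the criterion to diskcyclicity is Proposition \ref{dc}.

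The only real obstacle is the index bookkeeping — keeping track of the two reflections (in the dummy $k$ and in the outer parameters $h,j$) to confirm that the forward-shift ranges $[h-n,h-1]$ and $[j,j+n-1]$ really do translate into the backward-shift ranges $[h+1,h+n]$ and $[j+1-n,j]$. No new analytic idea beyond what is already in Theorem \ref{4.3.1} is required.
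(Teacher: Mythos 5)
Your proposal is correct and follows exactly the route the paper intends: the paper gives no separate proof of Theorem \ref{1}, stating only that backward shifts are unitarily equivalent to forward shifts so the results "can be proved by similar steps," and your flip unitary $Ue_n=e_{-n}$, the transfer of diskcyclicity via Proposition \ref{1.3}, and the index substitutions $\tilde w_k=w_{-k}$, $h\mapsto -h$, $j\mapsto -j$ all check out against the stated product ranges. You have simply supplied the details the paper omits.
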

\begin{corollary}\label{2}
Let $T\in \D C(\LZ)$ be a  backward weighted shift with the weight
sequence $\set{w_n}_{n\in\Z}$. Then there is a sequence
$\set{n_r}$ in $\N$ such that
\begin{enumerate}
  \item $\displaystyle\lim_{r\to\infty}\brc{\prod_{k=1}^{n_r}\frac{1}{w_k}}=0$;
  \item $\displaystyle\lim_{r\to\infty}\brc{\prod_{k=1}^{n_r}w_{-k}}
  \brc{\prod_{k=1}^{n_r}\frac{1}{w_{k}}}=0$.
\end{enumerate}
\end{corollary}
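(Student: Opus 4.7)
My plan is to mirror the proof of Corollary \ref{4.3.2}, invoking Theorem \ref{1} in place of Theorem \ref{4.3.1}. Setting $q=0$ in the second condition of Theorem \ref{1} forces $h=j=0$ and collapses its two sub-conditions to
\[\limsup_{n\to\infty}\prod_{k=1}^n w_k = \infty \qquad\text{and}\qquad \liminf_{n\to\infty}\frac{\prod_{k=1-n}^0 w_k}{\prod_{k=1}^n w_k} = 0.\]
Read conjunctively (as the proof of the implication from Theorem \ref{1}'s second condition to its third condition does), these produce a single sequence $\{n_r\}\subset\N$ realising both asymptotics at once. The first is precisely $\prod_{k=1}^{n_r}1/w_k\to 0$, which is conclusion (1).

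For conclusion (2), I would use the telescoping identity
\[\prod_{k=1-n}^0 w_k \;=\; w_0\prod_{j=1}^{n-1}w_{-j} \;=\; \frac{w_0}{w_{-n}}\prod_{j=1}^{n}w_{-j}\]
to rewrite the liminf ratio as
\[\frac{\prod_{k=1-n_r}^0 w_k}{\prod_{k=1}^{n_r}w_k} \;=\; \frac{w_0}{w_{-n_r}}\prod_{j=1}^{n_r}w_{-j}\prod_{k=1}^{n_r}\frac{1}{w_k}.\]
Solving for $\prod_{j=1}^{n_r}w_{-j}\prod_{k=1}^{n_r}1/w_k$ and noting that $|w_{-n_r}|\le\norm{T}$ since $T$ is bounded (so $|w_{-n_r}/w_0|$ stays bounded, assuming $w_0\neq 0$, as it must for any shift that can carry diskcyclicity), one has a bounded multiple of a null sequence, which is null. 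This gives (2).

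The only real subtlety is that separate ``$\limsup=\infty$'' and ``$\liminf=0$'' statements cannot in general be merged into one common sequence; the argument succeeds only because the forward implication in Theorem \ref{1} actually delivers a single $n$ meeting both inequalities simultaneously. A cleaner alternative is the unitary equivalence noted just before Theorem \ref{1}: the backward shift with weights $\{w_n\}_{n\in\Z}$ is unitarily equivalent via $Ue_n=e_{-n}$ to the forward shift with weights $\{w_{-n}\}_{n\in\Z}$. Applying Corollary \ref{4.3.2} to this conjugate forward shift and substituting $w_{-n}\leftrightarrow w_n$ back into its two conclusions then produces (1) and (2) with no index bookkeeping at all, which is probably the route the authors have in mind given their remark preceding Theorem \ref{1}.
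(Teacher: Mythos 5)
Your proposal is correct and follows essentially the route the paper itself takes: the paper gives no separate proof of Corollary \ref{2}, saying only that the backward-shift results follow from the forward-shift ones ``by similar steps'' via unitary equivalence --- which is precisely your second argument --- while your first argument is the line-by-line transcription of the proof of Corollary \ref{4.3.2} (with Theorem \ref{1} replacing Theorem \ref{4.3.1} and the same bounded factor $w_{-n_r}/w_0$ absorbing the index shift) that the paper's remark alludes to. The common-subsequence subtlety you flag is genuine but is equally present in the paper's own proof of Corollary \ref{4.3.2}, and your resolution (the forward implication of Theorem \ref{1} actually produces, for each $\eps$ and $q$, a single $n$ satisfying both estimates simultaneously) is the intended reading.
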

\begin{proposition}\label{3}
Let $T:\LZ\to\LZ$ be an invertible  backward weighted shift with
the weight sequence $\set{w_n}_{n\in\Z}$. Then $T\in\D C(\LZ)$ if and only if
there exists a sequence $\set{n_r}$ in $\N$ such that
\begin{enumerate}
  \item $\displaystyle\lim_{r\to\infty}\prod_{k=1}^{n_r}\frac{1}{w_{k}}=0$;
  \item   $\displaystyle\lim_{r\to\infty}\brc{\prod_{k=1}^{n_r}w_{-k}}
 \brc{\prod_{k=1}^{n_r}\frac{1}{w_{k}}}=0$.
\end{enumerate}
\end{proposition}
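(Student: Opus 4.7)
The plan is to mirror the proof of Proposition \ref{4.3.4}, modifying only the index arithmetic to reflect the backward-shift convention $Te_n=w_ne_{n-1}$. The ``only if'' direction is immediate from Corollary \ref{2} applied to the given $T$, so the substance lies in the converse.

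For the ``if'' direction I would verify the diskcyclicity criterion of Proposition \ref{dc}. Take $X=Y$ to be the dense subspace of finitely supported vectors in $\LZ$. Since $T$ is invertible, set $S=T^{-1}$; a direct calculation gives $Se_n=\frac{1}{w_{n+1}}e_{n+1}$, so $S$ is a bounded right inverse of $T$ with $TS=I$ on all of $\h$ and $S(Y)\subseteq Y$. A short induction yields the explicit formulas
\[T^n e_j=\brc{\prod_{k=j-n+1}^{j}w_k}e_{j-n},\qquad S^n e_j=\brc{\prod_{k=j+1}^{j+n}\frac{1}{w_k}}e_{j+n}.\]
Specializing to the pair $x=e_{-1}$, $y=e_0$ and taking norms,
\[\norm{S^{n_r}y}=\prod_{k=1}^{n_r}\frac{1}{\abs{w_k}},\qquad \norm{T^{n_r}x}\norm{S^{n_r}y}=\brc{\prod_{k=1}^{n_r}\abs{w_{-k}}}\brc{\prod_{k=1}^{n_r}\frac{1}{\abs{w_k}}},\]
so the two hypothesized limits are exactly conditions (2a) and (2b) of the criterion for this distinguished pair along the subsequence $\set{n_r}$.

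The main obstacle is the standard reduction from this single pair $(e_{-1},e_0)$ to arbitrary $x,y\in Y$. For any basis vector $e_j$ with $\abs{j}\le q$ and $n>2q$, the ratios $\norm{T^ne_j}/\norm{T^ne_{-1}}$ and $\norm{S^ne_j}/\norm{S^ne_0}$ are products of at most $2q+1$ weights (or their reciprocals) drawn from a \emph{fixed} window of indices independent of $n$, together with an analogous window shifted by $n$. Invertibility of $T$ is equivalent to the existence of $c>0$ with $\abs{w_n}\ge c$ for all $n\in\Z$ (as noted just before Corollary \ref{bif d}), while boundedness of $T$ gives a uniform upper bound on $\abs{w_n}$; together these keep the correction factors uniformly bounded in $n$, so convergence to $0$ is preserved. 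This is precisely the role played by \cite[Lemma 3.1]{H C Bi.} and \cite[Lemma 3.3]{H C Bi.} in Proposition \ref{4.3.4}; invoking the backward-shift analogues and then using linearity and the triangle inequality extends the verification of the criterion to all of $X=Y$. Proposition \ref{dc} then yields $T\in\D C(\LZ)$.
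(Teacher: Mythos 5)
Your proposal is correct and is essentially the paper's intended argument: the paper gives no explicit proof of this proposition, merely remarking that the backward-shift results follow from the forward-shift ones (Proposition \ref{4.3.4}) by unitary equivalence and ``similar steps,'' and your write-up carries out exactly those steps, with the index bookkeeping ($T^ne_j$, $S^ne_j$, the distinguished pair $e_{-1},e_0$) done correctly. The only-if direction via Corollary \ref{2} and the reduction to a single pair via the Feldman lemmas match the structure of the paper's proof of Proposition \ref{4.3.4} verbatim.
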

\begin{corollary}\label{biB c}
Suppose $T:\LZ\to\LZ$ is a bilateral  backward weighted shift with the weight
sequence $\set{w_n}_{n\in\Z}$ and there exists  a positive integer $m > 0$ such that $w_n \ge m$ for all $n < 0$ (or for all $n > 0$). Then $T\in\D C(\LZ)$ if and only if there
is a sequence  $\set{n_r}$ in $\N$ such that
\begin{enumerate}
  \item   $\displaystyle\lim_{r\to\infty}\prod_{k=1}^{n_r}\frac{1}{w_{k}}=0$;
  \item   $\displaystyle\lim_{r\to\infty}\brc{\prod_{k=1}^{n_r}w_{-k}}\brc{\prod_{k=1}^{n_r}\frac{1}{w_{k}}}=0$.
\end{enumerate}
\end{corollary}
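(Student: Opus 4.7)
For the \emph{only if} direction I would simply invoke Corollary \ref{2}. All the work lies in the \emph{if} direction, and the plan is to apply Theorem \ref{1} by verifying its conditions (a) and (b) for every $q\in\N$ along (a subsequence of) the given $\set{n_r}$. At $q=0$ these reduce---up to the bounded factor $w_0/w_{-n_r}$ in (b)---to the hypotheses (1) and (2), so the real task is to promote the $q=0$ information to a $q$-uniform statement, which is exactly what the lower bound $w_n\ge m$ will provide.

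The key estimate I would establish is the following: setting $M=\norm{T}=\sup_n\abs{w_n}$, for $\abs{h}\le q$ and $n>2q$ a telescoping gives
$$\prod_{k=h+1}^{h+n} w_k=\brc{\prod_{k=1}^{n}w_k}\cdot R_h(n),\qquad \prod_{k=j+1-n}^{j}w_k=\brc{\prod_{k=1-n}^{0}w_k}\cdot S_j(n),$$
where $R_h(n)$ and $S_j(n)$ are each a ratio of two products of at most $q$ weights. I expect one of these two products to lie on the half-line where $w_k\ge m$, so it will be pinched between $m^{q}$ and $M^{q}$, while the other involves only indices in the fixed window $\set{-q,\dots,q}$, where the weighted-shift weights are nonzero and hence give a positive constant depending only on $q$ and on finitely many weights. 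This will produce constants $0<c_q\le C_q$ with $c_q\le R_h(n),S_j(n)\le C_q$ uniformly in $\abs{h},\abs{j}\le q$ and $n>2q$. Substituting $n=n_r$ and using hypotheses (1) and (2), the first display then tends to $\iy$ uniformly in $\abs{h}\le q$ (condition (a) of Theorem \ref{1}), and the ratio in (b) is bounded by $C_q/c_q$ times its $q=0$ analogue and hence tends to $0$. Theorem \ref{1} will then deliver $T\in\D C(\LZ)$.

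The hard part will be the case analysis produced by the \emph{disjunctive} hypothesis ``$w_n\ge m$ for $n<0$ or for $n>0$'': depending on which half holds, the $(m,M)$-pinch is available only on the corresponding half-line, and the opposite factor must be controlled using only the nonvanishing of the finitely many weights in $\set{-q,\ldots,q}$. I also need to absorb the small discrepancy between $\prod_{k=1-n}^{0}w_k$ and $\prod_{k=1}^{n}w_{-k}$, namely the single factor $w_0/w_{-n_r}$; this can be handled either directly from the $m$-bound on the side where it applies, or by passing to a subsequence on which $w_{-n_r}$ stays bounded away from $0$ (an adjustment always possible because if $w_{-n}\to 0$ then $\prod_{k=1}^{n}w_{-k}\to 0$ already, which simplifies hypothesis (2)). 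Beyond this bookkeeping the argument introduces no new idea and parallels the forward-shift proof of Corollary \ref{bif d}.
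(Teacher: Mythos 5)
Your overall route is exactly the paper's: the paper disposes of this corollary by appeal to the forward case (Corollary \ref{bif d}), whose ``only if'' part is Corollary \ref{4.3.2} and whose ``if'' part is declared ``similar to the proof of Feldman's Theorem 4.1'' and skipped; you do the same thing directly on the backward side via Corollary \ref{2} and Theorem \ref{1}. The trouble lies in the one substantive detail you add, namely the claimed uniform pinch $c_q\le R_h(n),S_j(n)\le C_q$. Each correction factor is a ratio of a product of at most $q$ weights in the fixed window $\set{-q,\dots,q}$ and a product of at most $q$ weights at the \emph{moving} end of the block: for $R_h$ these sit at indices within distance $q$ of $+n$, for $S_j$ at indices within distance $q$ of $-n$. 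Under the disjunctive hypothesis only one half-line carries the bound $w_k\ge m$, so only one of $R_h$, $S_j$ inherits a lower bound from it; the moving product on the other side has no lower bound at all, and your fallback --- ``the nonvanishing of the finitely many weights in $\set{-q,\ldots,q}$'' --- cannot supply one, because the weights in question escape to $\pm\infty$ with $n_r$ and do not belong to any fixed finite set. Concretely, if $w_n\ge m$ holds only for $n>0$, then for $j>0$ the factor $\prod_{k=1-n}^{j-n}w_k$ sitting in the denominator of $S_j(n)$ may be arbitrarily small without disturbing hypotheses (1)--(2) (small negative-index weights only help (2)), so the upper bound on $S_j(n)$ needed for condition (b) of Theorem \ref{1} is simply not available from your stated ingredients.

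What closes this --- and what the paper's phrase ``similar to the proof of Feldman, Theorem 4.1'' is actually hiding --- is an index shift rather than a pinch: one verifies the diskcyclicity criterion on the basis vectors $e_h,e_j$ directly and, for each fixed $j$, evaluates the hypotheses at $n=n_r+j$ (or at $n_r+h$, depending on which half-line carries the $m$-bound), so that the off-centre product $\prod_{k=j+1-n}^{j}w_k$ becomes exactly the centred product $\prod_{k=1-n_r}^{0}w_k$ times a nonzero window constant, while the leftover $O(q)$ weights land on the half-line where $w_k\ge m$ and are absorbed there. Your outline does not contain this device, and without it the ``if'' direction does not go through under the stated one-sided hypothesis; with it, the rest of your plan (including the treatment of the stray factor $w_0/w_{-n_r}$) is fine.
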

Now, we will give an example of a diskcyclic operator which is not hypercyclic.
\begin{example}\label{2.12}
Let $T:\LZ\to\LZ$ be the bilateral forward weighted shift with
the weight sequence
\begin{equation*}
w_n=
\begin{cases} 2 & \text{if $n \geq 0$,}
\\
3 &\text{if $n< 0$.}
\end{cases}
\end{equation*}
Then $T$ is diskcyclic but not hypercyclic.
\end{example}
\begin{proof}
By applying Corollary \ref{bif d} and taking $n_r=n$ (the set of all natural numbers). Observe that $$\displaystyle\lim_{n\to\infty}\prod_{k=1}^{n}\frac{1}{w_{-k}} = \displaystyle\lim_{n\to\infty}\prod_{k=1}^{n}\frac{1}{3}=\displaystyle\lim_{n\to\infty}\frac{1}{3^n}=0;$$
and
$$\displaystyle\lim_{n\to\infty}\brc{\prod_{k=1}^{n}w_{k}}\brc{\prod_{k=1}^{n}\frac{1}{w_{-k}}}= \displaystyle\lim_{n\to\infty}\brc{\prod_{k=1}^{n}2}\brc{\prod_{k=1}^{n}\frac{1}{3}}=\displaystyle\lim_{n\to\infty}(2^n)(\frac{1}{3^n})=0.$$
Thus by Corollary  \ref{bif d}, $T$ is diskcyclic. On the other hand, since for all increasing sequence ${n_r}$ of positive integers
$$\displaystyle\lim_{r\to\infty}\brc{\prod_{k=1}^{n_r}w_{k}}=\displaystyle\lim_{r\to\infty}
\brc{\prod_{k=1}^{n_r}2}=\displaystyle\lim_{n\to\infty}(2^{n_r})=\iy,$$
from \cite[Theorem 4.1.]{H C Bi.}, $T$ can not be hypercyclic.
\end{proof}
The following example gives us an operator which is supercyclic but not diskcyclic.
\begin{example}
Let $T:\LZ\to\LZ$ be a bilateral forward weighted shift with the
weight sequence
\begin{equation*}
w_n=
\begin{cases} \frac{1}{3} & \text{if $n \geq 0$,}
\\
\frac{1}{2} &\text{if $n< 0$.}
\end{cases}
\end{equation*}
Then $T$ is supercyclic but not diskcyclic.
\end{example}
\begin{proof}
By taking $n_r=n$, we have

$$\displaystyle\lim_{n\to\infty}\brc{\prod_{k=1}^{n}w_{k}}\brc{\prod_{k=1}^{n}\frac{1}{w_{-k}}}= \displaystyle\lim_{n\to\infty}\brc{\prod_{k=1}^{n}\frac{1}{3}}\brc{\prod_{k=1}^{n}2}=
\displaystyle\lim_{n\to\infty}(\frac{1}{3^n})(2^n)=0.$$
From \cite[Theorem 4.1]{H C Bi.}, $T$ is supercyclic. However, for all increasing sequence ${n_r}$ of positive integers we have
$$\displaystyle\lim_{r\to\infty}\prod_{k=1}^{n_r}\frac{1}{w_{-k}} = \displaystyle\lim_{r\to\infty}\prod_{k=1}^{n_r}2=\displaystyle\lim_{r\to\infty}2^{n_r}=\iy.$$
Hence, by Corollary \ref{bif d}, $T$ is not diskcyclic.
\end{proof}
The following results are noteworthy spectral properties of diskcyclic operators.
\begin{proposition} \label{2.16}
Let $T\in\D C(\h)$. Then $T^*$ has at most one eigenvalue and its modules is greater than $1$.
\end{proposition}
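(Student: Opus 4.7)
The plan is to split the claim into (i) every eigenvalue $\lambda$ of $T^*$ satisfies $|\lambda|>1$, and (ii) $T^*$ has at most one eigenvalue. Both parts exploit the same template: compose the dense disk orbit with a suitable continuous linear functional, and read off the restrictions on $\lambda$.

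For (i), I would fix $x\in \D C(T)$ and an eigenvector $y$ of $T^*$ with $T^*y=\lambda y$. Since $\langle\,\cdot\,,y\rangle$ is a nonzero continuous linear functional on $\h$, it is surjective onto $\co$, so $\{\langle \alpha T^n x,y\rangle : |\alpha|\le 1,\ n\ge 0\}$ must be dense in $\co$. A short computation gives $\langle \alpha T^n x,y\rangle=\alpha\bar\lambda^{\,n}\langle x,y\rangle$, so the image equals $\langle x,y\rangle\cdot\bigcup_{n\ge 0}\overline{\mathbb B}(0,|\lambda|^n)$. Density forces $\langle x,y\rangle\neq 0$ (else the image is $\{0\}$) and then $\sup_n|\lambda|^n=\infty$, giving $|\lambda|>1$.

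For (ii), suppose for a contradiction that $T^*y_i=\lambda_i y_i$ for $i=1,2$ with $\lambda_1\neq\lambda_2$. Distinct eigenvalues force $y_1,y_2$ to be linearly independent, hence the map $\Phi:\h\to\co^2$, $\Phi(w)=(\langle w,y_1\rangle,\langle w,y_2\rangle)$, is surjective, so $\Phi(\D Orb(T,x))$ must be dense in $\co^2$. Setting $c_i=\langle x,y_i\rangle\neq 0$ (by part (i)), this image is $\{(\alpha\bar\lambda_1^{\,n}c_1,\,\alpha\bar\lambda_2^{\,n}c_2):|\alpha|\le 1,\ n\ge 0\}$, and whenever $\alpha\neq 0$ the ratio of the coordinates lies in the set $S:=\{(c_1/c_2)(\bar\lambda_1/\bar\lambda_2)^n:n\ge 0\}$.

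The main obstacle is to show $\overline{S}\neq\co$ cleanly and then to leverage it. If $|\lambda_1|=|\lambda_2|$, every element of $S$ has modulus $|c_1/c_2|$, so $\overline{S}$ lies on a single circle; if $|\lambda_1|\neq|\lambda_2|$, the moduli of the elements of $S$ form a strictly monotone geometric sequence, so $\overline{S}$ is either bounded or bounded away from $0$, and in particular misses an open subset of $\co$. In either case, I would pick $(z_1,z_2)\in\co^2$ with $z_2\neq 0$ and $z_1/z_2\notin\overline{S}$. Any approximating sequence $\alpha_k(\bar\lambda_1^{\,n_k}c_1,\bar\lambda_2^{\,n_k}c_2)\to(z_1,z_2)$ must satisfy $\alpha_k\neq 0$ for large $k$ (its second coordinate tends to $z_2\neq 0$), hence the ratios $(c_1/c_2)(\bar\lambda_1/\bar\lambda_2)^{n_k}\in S$ converge to $z_1/z_2$, contradicting $z_1/z_2\notin\overline{S}$. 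This contradiction proves (ii) and, combined with (i), yields the proposition.
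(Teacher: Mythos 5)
Your proof is correct, and the modulus argument in part (i) is essentially the one the paper uses: pair the dense disk orbit against an eigenvector of $T^*$, compute $\langle \alpha T^n x, y\rangle = \alpha\bar\lambda^{\,n}\langle x,y\rangle$, and observe that boundedness of this set when $|\lambda|\le 1$ contradicts density. Where you genuinely diverge is part (ii): the paper does not prove ``at most one eigenvalue'' at all, but simply imports it from Herrero's result that the point spectrum of the adjoint of a supercyclic operator contains at most one (non-zero) eigenvalue, using the inclusion $\D C(\h)\subseteq SC(\h)$. You instead give a self-contained argument: two independent eigenvectors yield a continuous open surjection $\Phi:\h\to\co^2$, the image of the disk orbit is forced to be dense in $\co^2$, and the coordinate ratios of that image are confined to the set $S=\set{(c_1/c_2)(\bar\lambda_1/\bar\lambda_2)^n}$, whose closure misses a nonempty open subset of $\co$ in every case --- a contradiction. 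The case analysis on $|\lambda_1/\lambda_2|$ is complete, the reduction to ratios correctly handles $\alpha_k=0$ via the condition $z_2\neq 0$, and the non-vanishing of $c_1,c_2$ is legitimately supplied by your part (i). What your route buys is independence from the supercyclic literature (the Hilden--Wallen/Herrero argument is effectively reproved in the diskcyclic setting); what the paper's route buys is brevity and an explicit link to the known spectral picture for supercyclic operators. One cosmetic note: your version actually repairs a small slip in the paper's displayed estimate, which writes $|\mu|^n$ where $|\mu|$ is meant.
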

\begin{proof}
Since $T\in SC(\h)$, then $\si_p(T^*)$ contains at most one
non-zero eigenvalue; say $\lm$ \cite[Proposition 3.1.]{limits}. Hence, there is a unit vector
$z\in\h$ such that  $T^*z=\lm z$. Let $x\in\D C(T)$. Then, it is easy to prove
that
\begin{equation}\label{144}
\set{\abs{\iner{\mu T^nx,z}}\suc n\ge0,\mu\in
\co;\abs{\mu}\le1}\,\text{is dense in}\,\R^+\cup\set{0}.
\end{equation}
Note that for all $n\ge1$, $\abs{\iner{\mu
T^nx,z}}\le\abs{\mu}^n\abs{\lm}^n\abs{\iner{x,z}}$. If we suppose $\abs{\lm}\le1$,
then \[\abs{\iner{\mu T^nx,z}}\le\abs{\iner{x,z}},\]
which contradicts (\ref{144}). Therefore, we reach the desired result.
\end{proof}
\begin{corollary}\label{2.17}
Let $T\in\BH$. If $\sigma(T)$ has a connected component $\sigma$ such that
$\sigma\subset B(0,1)$, then $T\not\in\D C(\h)$.
\end{corollary}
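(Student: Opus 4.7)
The plan is to argue by contradiction: assuming $T\in\D C(\h)$, I would isolate the part of $T$ supported on the spectral component $\sigma$ via the Riesz functional calculus, transfer diskcyclicity to this restriction using Proposition \ref{1.3}, and derive a contradiction from the fact that the spectral radius on the restricted space is strictly less than $1$.

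First, I would extract a closed $T$-invariant subspace on which $T$ has spectrum inside $B(0,1)$. Since $\sigma(T)$ is a compact Hausdorff space, its connected component $\sigma$ equals its quasi-component. Together with the compactness of $\sigma(T)\setminus B(0,1)$, this lets me find a set $V\subseteq\sigma(T)$ that is clopen in $\sigma(T)$ and satisfies $\sigma\subseteq V\subseteq B(0,1)$ (cover $\sigma(T)\setminus B(0,1)$ by complements of clopen neighborhoods of $\sigma$, extract a finite subcover, and intersect). Surrounding $V$ and $\sigma(T)\setminus V$ by disjoint Cauchy contours, the Riesz functional calculus then produces a bounded projection $P$ on $\h$ with $PT=TP$, and $M:=P(\h)$ is a nonzero closed $T$-invariant subspace with $\sigma(T|_M)=V\subset B(0,1)$.

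Next, I would apply Proposition \ref{1.3} with $\mathcal K=M$, $G=P$ regarded as a surjection $\h\to M$ (hence of dense range in $M$), and $S=T|_M$; the intertwining $SG=GT$ follows at once from $PT=TP$, yielding $T|_M\in\D C(M)$.

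Finally, I would contradict this. Because $\sigma(T|_M)\subset B(0,1)$, the spectral radius $r(T|_M)<1$, so Gelfand's formula gives $\norm{(T|_M)^n}\to 0$; hence for every $x\in M$ and every $\alpha\in\co$ with $\abs{\alpha}\le 1$, $\norm{\alpha(T|_M)^n x}\le\norm{(T|_M)^n}\norm{x}\to 0$. The disk orbit of $x$ under $T|_M$ is therefore bounded, which precludes density in the nonzero space $M$ and contradicts $T|_M\in\D C(M)$. The main obstacle I anticipate is the first step: a single connected component of a compact subset of $\co$ need not be open in the subset, so producing the clopen separator $V$ with $\sigma\subseteq V\subseteq B(0,1)$ requires the quasi-component identity combined with the finite-intersection argument sketched above. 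Once $P$ is available, the remainder is routine.
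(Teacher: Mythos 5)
Your proposal is correct and follows essentially the same route as the paper: both isolate the spectral piece lying inside $B(0,1)$ via the Riesz decomposition and derive the contradiction from the boundedness of the disk orbit there, the only cosmetic difference being that you transfer diskcyclicity through Proposition \ref{1.3} with the Riesz projection while the paper invokes Proposition \ref{direct} on the direct sum $T=T_1\oplus T_2$. You are in fact more careful than the paper on one point: the paper asserts $\sigma(T_1)=\sigma$ outright, whereas a connected component need not be clopen in $\sigma(T)$, so your quasi-component argument producing a clopen $V$ with $\sigma\subseteq V\subseteq B(0,1)$ is the honest way to make the decomposition legitimate.
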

\begin{proof}
Towards a contradiction, suppose that a diskcyclic operator $T$ has a connected component $\sigma$ such that $\sigma\subset B(0,1)$. Then, by Riesz decomposition Theorem, $T=T_1\oplus T_2$ such that $\sigma(T_1)=\sigma$. It follows that $\D Orb(T_1,x)$ is bounded for all $x\in \h$ and hence $T_1$ can not be dense in $\h$, a contradiction to Proposition \ref{direct}.
\end{proof}
\section{Main Results}
We adjust the diskcyclicity criterion in order to obtain another version.
\begin{theorem}[Second Diskcyclicity Criterion]\label{dc2}
Let $T \in \BH$. If there exists an increasing sequence of integers $\{n_k\}_{k\in \N}$ and two dense sets $D_1, D_2 \in \h$ such that
\begin{enumerate}[(a)]
\item \label{a7} For each $y\in D_2$, there exists a sequence $\{x_k\}$ in $\h$ such that  $x_k \rightarrow 0$, and $T^{n_k}x_k \rightarrow y$,
\item \label{b7} $\left\|T^{n_k}x\right\|\left\|x_k\right\|\rightarrow 0$  for all $x \in D_1$,
\end{enumerate}
then $T$ is diskcyclic.
\end{theorem}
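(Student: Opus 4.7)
My plan is to verify the transitive formulation (\ref{d1}) of Proposition~\ref{d.t.}: given $x,y\in\h$ and a zero neighborhood $W$, I want to produce $z\in\h$, $n\in\N$, $\al\in\co$ with $0<|\al|\le 1$ such that $x-z\in W$ and $\al T^nz-y\in W$. Because $D_1$ and $D_2$ are dense, a standard triangle-inequality approximation lets me reduce the task to the case $x\in D_1$ and $y\in D_2$, so I work directly with such a pair.

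Using hypothesis~(\ref{a7}) I fix $\{x_k\}\subset\h$ with $x_k\to 0$ and $T^{n_k}x_k\to y$; by~(\ref{b7}), $\norm{T^{n_k}x}\norm{x_k}\to 0$. The natural ansatz is
\[
z_k=x+\al_k^{-1}x_k,\qquad n=n_k,\qquad \al=\al_k,
\]
for positive scalars $\al_k\in(0,1]$ still to be chosen; a one-line computation then gives
\[
\al_k T^{n_k}z_k=\al_k T^{n_k}x+T^{n_k}x_k.
\]
Thus $z_k\to x$ reduces to $\norm{x_k}/\al_k\to 0$, and $\al_k T^{n_k}z_k\to y$ reduces to $\al_k\norm{T^{n_k}x}\to 0$, since the second summand on the right already tends to $y$ by construction.

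The one delicate step is to enforce both limits simultaneously while keeping $\al_k\in(0,1]$. Writing $a_k=\norm{T^{n_k}x}$ and $b_k=\norm{x_k}$, the product $a_kb_k\to 0$ supplied by~(\ref{b7}) is exactly the necessary compatibility condition, and the geometric-mean choice
\[
\al_k=\sqrt{\frac{b_k}{\max(a_k,b_k)}}
\]
(set to $1$ in the trivial case $a_k=b_k=0$) lies in $(0,1]$ and yields both $\al_k a_k\le\sqrt{a_kb_k}$ and $b_k/\al_k\le\sqrt{\max(a_k,b_k)\,b_k}$; a short two-case check (whether $a_k\ge b_k$ or not) verifies these bounds, and both right-hand sides vanish because $a_kb_k\to 0$ and $b_k\to 0$. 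Hence for all sufficiently large $k$ the triple $(z_k,n_k,\al_k)$ satisfies condition~(\ref{d1}), and Proposition~\ref{d.t.} then identifies $T$ as diskcyclic. The principal obstacle is precisely this simultaneous balancing of $\al_k$ against the product $a_kb_k$ subject to $|\al_k|\le 1$, a constraint absent in the hypercyclicity criterion and trivialised in the supercyclicity criterion, which explains why (b) must be phrased as a product rather than as two separate limits.
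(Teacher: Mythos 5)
Your proof is correct, but it takes a genuinely different route from the paper's. The paper deduces the theorem from the first Diskcyclicity Criterion (Proposition~\ref{dc}): it asserts that the approximating vectors $x_k$ of hypothesis~(\ref{a7}) can be realised as $S^{n_k}y$ for suitable right inverses $S$ of $T$, and then says condition~(\ref{b6}) of Proposition~\ref{dc} ``directly follows.'' That reduction is stated in two lines and glosses over the fact that~(\ref{a7}) only gives $T^{n_k}x_k\to y$ rather than $T^{n_k}x_k=y$, so maps $S$ with $TS=I$ and $S^{n_k}y=x_k$ are not literally produced. You instead verify condition~(\ref{d1}) of Proposition~\ref{d.t.} directly, via the ansatz $z_k=x+\al_k^{-1}x_k$ and the geometric-mean choice of $\al_k$; this is self-contained (modulo Proposition~\ref{d.t.}, which the paper proves) and makes the role of the product hypothesis~(\ref{b7}) transparent: with $a_k=\norm{T^{n_k}x}$ and $b_k=\norm{x_k}$, the condition $a_kb_k\to 0$ is exactly what lets $\al_k$ be squeezed into $(0,1]$ while simultaneously killing the two error terms $\al_k a_k$ and $b_k/\al_k$, and your two-case verification of the bounds $\al_k a_k\le\sqrt{a_kb_k}$ and $b_k/\al_k\le\sqrt{\max(a_k,b_k)\,b_k}$ checks out. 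The only loose end is the degenerate case $b_k=0<a_k$, where your formula gives $\al_k=0$, violating $0<\abs{\al_k}\le 1$; but if this occurs for infinitely many $k$ then $y=\lim T^{n_k}x_k=0$ and condition~(\ref{d1}) is immediate (take $z=x$ and $\al$ small), and otherwise those indices can be discarded, so this is cosmetic rather than a gap.
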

\begin{proof}
We will verify Proposition \ref{dc}. Since for each $y\in D_2$ there exists a sequence $\{x_k\}$ in $\h$ such that $T^{n_k}x_k \rightarrow y$, then there are maps $S$ which are right inverses of $T$ on $D_2$ such that $x_k=S^{n_k}y$ for some increasing sequence of integers $\{n_k\}_{k\in \N}$. Moreover, the condition (\ref{b6}) of Proposition \ref{dc} directly follows.
\end{proof}
\begin{proposition}
 Both the diskcyclicity criteria are equivalent to each other.
\end{proposition}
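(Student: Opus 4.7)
The plan is to prove the two implications separately. The forward direction (Proposition \ref{dc} $\Rightarrow$ Theorem \ref{dc2}) is immediate from a direct substitution; the backward direction requires reading the role of the right inverse $S$ in the first criterion a bit loosely.

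For Proposition \ref{dc} $\Rightarrow$ Theorem \ref{dc2}, I would take the same sequence $\{n_k\}$, set $D_1:=X$ and $D_2:=Y$, and for each $y\in D_2$ define $x_k:=S^{n_k}y$. Since $S(Y)\subseteq Y$ and $TS=I_Y$, iteration gives $T^{n_k}S^{n_k}y=y$ for every $y\in Y$, so $T^{n_k}x_k=y\to y$ trivially. Condition (\ref{b6})(a) yields $\norm{x_k}=\norm{S^{n_k}y}\to 0$, and (\ref{b6})(b) is exactly the statement $\norm{T^{n_k}x}\norm{x_k}\to 0$ for $x\in D_1$. Thus Theorem \ref{dc2} holds.

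For Theorem \ref{dc2} $\Rightarrow$ Proposition \ref{dc}, I would take $X:=D_1$, $Y:=D_2$ and the same $\{n_k\}$. For each $y\in Y$, fix a sequence $\{x_k^{(y)}\}$ supplied by (\ref{a7}) and (\ref{b7}), and define maps $S_{n_k}\colon Y\to\h$ by $S_{n_k}(y):=x_k^{(y)}$; these take the place of the iterates $S^{n_k}$ in the first criterion, and they are right inverses of the corresponding iterate of $T$ on $Y$ up to the convergence $T^{n_k}S_{n_k}y\to y$. Then $\norm{S_{n_k}y}\to 0$ and $\norm{T^{n_k}x}\norm{S_{n_k}y}\to 0$ follow directly from (\ref{a7}) and (\ref{b7}).

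The main technical obstacle is the apparent discrepancy between Proposition \ref{dc}, which postulates a single right inverse $S$ satisfying $TS=I_Y$ exactly, and Theorem \ref{dc2}, which only requires the asymptotic $T^{n_k}x_k\to y$. I would resolve this by observing that the proof of Proposition \ref{dc} (modelled on \cite[Theorem 1.6]{dynamic} and \cite[Theorem 1.14]{dynamic}) never uses the identity $T^{n}S^{n}=I_Y$ in isolation: what is actually needed to verify condition (\ref{d1}) of Proposition \ref{d.t.} is the convergence $T^{n_k}S^{n_k}y\to y$ together with the smallness of $\norm{S^{n_k}y}$ and $\norm{T^{n_k}x}\norm{S^{n_k}y}$. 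Consequently a family of maps $S_{n_k}$ behaves exactly like iterates of a right inverse for the purpose of the criterion, and both formulations produce the same disk-transitivity conclusion via Proposition \ref{d.t.}.
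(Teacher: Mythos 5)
Your proof is correct and takes essentially the same route as the paper: the forward direction by substituting $x_k = S^{n_k}y$ and invoking conditions (2a) and (2b) of Proposition \ref{dc}, and the backward direction by reinterpreting the sequences $\{x_k\}$ as iterates of (approximate) right inverses, which is exactly what the paper's proof of Theorem \ref{dc2} does. You are in fact more careful than the paper's one-line argument about the discrepancy between the exact identity $TS=I_Y$ and the merely asymptotic condition $T^{n_k}x_k\to y$, a point the paper silently elides.
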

\begin{proof}
If $T$ satisfies the diskcyclicity criterion then, by setting $S^{n_k}y=x_k$, we get the desired result.\\
The other implication follows from Theorem \ref{dc2}.
\end{proof}
Now we illustrate the Second Diskcyclicity Criterion with the following example.
\begin{example}\label{2B}
Let $T=2B$, where $B$ is the unilateral backward shift on $\LN$. Then $T$ is diskcyclic.
\end{example}
\begin{proof}
Let $X=Y$ be the dense set in $\LN$ such that all except finitely many coordinates of each element of $X$ are zero and let $n_k = k$ (the set of all natural numbers). Then, we will achieve conditions (\ref{a7}) and (\ref{b7}) of Theorem \ref{dc2}.
For each $y\in Y$ assume that $x_k=\left(({1}/{2})F\right)^ky$, where $F$ is the unilateral forward shift on $\LN$. Therefore, $\norm{x_k}=\norm{\left({1}/{2^k}\right)F^ky}\to 0$ as $k\to \iy$. Moreover, $T^kx_k=(2B)^k({F}/{2})^ky=y$. \\
Now, since $\norm{B^kx}=0$ eventually for a large enough $k$, then we have
$$\norm{T^kx}\norm{x_k}=\norm{2^kB^kx}\left\|{\frac{1}{2^k}F^ky}\right\|=\norm{B^kx}\norm{F^ky}=0.$$
It follows that $T$ satisfies the Second Diskcyclicity Criterion and so $T$ is a diskcyclic operator.
\end{proof}
In general case, we have the following example.
\begin{example}\label{a>B}
Let $T=aB$, where $B$ is the unilateral backward shift on $\LN$ and $|a| >1$. Then $T$ satisfies the Second Diskcyclicity Criterion.
\end{example}
The next example shows that not every multiple of the unilateral backward shift is diskcyclic.
\begin{example}\label{a<B}
If $T=aB$, where $|a|\leq 1$, and $B$ is the unilateral backward shift on $\LN$, then $T$ is not diskcyclic.
\end{example}
\begin{proof}
Since $T^k(e_n)={a^k}e_{n-k}$, where $\{e_n\}_{n\in \N}$ is the canonical basis of $\LN$, then $\left\|T^k(e_n)\right\|={|a|^k}\le 1$. It follows that $\alpha \left\|T^k(x)\right\| \to 0$ for all $x\in \LN$ and $\alpha\in\co,\, \left|\alpha\right|\leq 1$. Thus $\D Orb(T,x)$ is bounded and can not be dense in $\LN$.
\end{proof}
From Example \ref{a>B}, Example \ref{a<B}  and \cite[Corollary 1.6]{Kitai} we can easily deduce the following corollary.
\begin{corollary}\label{Back not D}
A multiple of a unilateral backward shift on $\LN$ is hypercylcic if and only if it is diskcyclic.
\end{corollary}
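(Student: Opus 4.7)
The plan is to combine the two examples immediately preceding the corollary with Kitai's characterization of hypercyclic multiples of the unilateral backward shift. Write $T = aB$ with $a \in \co$.

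For the forward implication, I would simply invoke the general chain ``Hypercyclicity $\Rightarrow$ Diskcyclicity'' that appears in the diagram in the introduction: if $T$ is hypercyclic then it is automatically diskcyclic, with no use of the specific structure of $T$.

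For the converse, I would argue contrapositively on $|a|$. Suppose $T = aB$ is diskcyclic. By Example \ref{a<B}, the case $|a| \le 1$ is impossible, since in that case the disk orbit of any vector is bounded and therefore cannot be dense in $\LN$. Hence $|a| > 1$. Once this is established, \cite[Corollary 1.6]{Kitai} (the Rolewicz theorem) gives that $aB$ is hypercyclic on $\LN$, completing the proof.

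There is no genuine obstacle here: the corollary is a direct repackaging of the trichotomy for $T = aB$, namely that $|a| > 1$ implies hypercyclicity (Rolewicz/Kitai), $|a| > 1$ is also sufficient for diskcyclicity (Example \ref{a>B}), and $|a| \le 1$ rules out diskcyclicity (Example \ref{a<B}). The only minor point to be careful about is to phrase the argument so that one does not need to reprove the hypercyclicity of $aB$ for $|a| > 1$; citing Kitai closes that gap in one line.
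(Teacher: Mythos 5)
Your proposal is correct and follows essentially the same route as the paper, which deduces the corollary directly from Example \ref{a>B}, Example \ref{a<B} and \cite[Corollary 1.6]{Kitai}. The only cosmetic difference is that you handle the forward direction via the general implication ``hypercyclicity $\Rightarrow$ diskcyclicity'' rather than via Example \ref{a>B}; both close that direction in one line.
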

The following corollary follows directly from \cite[Theorem 1.40]{dynamic} and Corollary \ref{Back not D}.
\begin{corollary}
If $B$ is a unilateral backward weighted shift with weight sequence $\set{w_n}$, then $B$ is diskcyclic if and only if $$\lim \sup_{n\to \iy}\brc{w_1,w_2\ldots w_n}=\iy.$$
\end{corollary}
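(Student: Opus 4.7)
The plan is to deduce this corollary by combining the hypercyclicity criterion for unilateral backward weighted shifts (\cite[Theorem 1.40]{dynamic}, which states that $B$ is hypercyclic iff $\limsup_n w_1 w_2 \cdots w_n = \iy$) with an argument that extends Example \ref{a<B}, the mechanism behind Corollary \ref{Back not D}, from scalar multiples $aB$ to arbitrary weights. The scalar case is exactly $w_k \equiv a$, where the threshold $|a|>1$ becomes $\limsup a^n = \iy$, so the corollary is a natural weighted generalization of Corollary \ref{Back not D}.

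For the ``if'' direction, suppose $\limsup_{n\to\iy} w_1 w_2 \cdots w_n = \iy$. Then \cite[Theorem 1.40]{dynamic} gives that $B$ is hypercyclic, and the implication chain in the introduction (hypercyclicity $\Rightarrow$ diskcyclicity) yields $B\in\D C(\LN)$ at once.

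For the ``only if'' direction, I would argue by contrapositive. Assume $\limsup_n w_1 w_2 \cdots w_n < \iy$; then there exists $C>0$ such that $\prod_{k=1}^n w_k \le C$ for all $n\ge 0$. From $Be_n = w_n e_{n-1}$ we get $B^n e_n = (w_1 w_2 \cdots w_n)\,e_0$, so for any $x=\sum_{n\ge 0} x_n e_n \in \LN$ the zeroth coordinate of $B^n x$ equals $x_n \prod_{k=1}^n w_k$. Hence for every $n\ge 0$ and every $\al\in\co$ with $\abs{\al}\le 1$,
\[
 \abs{(\al B^n x)_0} \;\le\; \abs{\al}\,\abs{x_n}\,C \;\le\; C\norm{x}.
\]
Thus $\D Orb(B,x)$ is trapped in the proper closed slab $\set{y\in\LN : \abs{y_0}\le C\norm{x}}$, which cannot be dense in $\LN$. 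Since this holds for every $x$, no diskcyclic vector exists, i.e.\ $B\notin\D C(\LN)$.

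The main obstacle I foresee is purely cosmetic: checking that the indexing of the product matches the convention in \cite[Theorem 1.40]{dynamic}, and noting that $\limsup<\iy$ is equivalent to $\sup<\iy$ here since each initial partial product is automatically finite. Once those are settled, the proof reduces to the one-line slab bound above, which is the exact weighted analogue of the bound $\norm{T^k e_n} = \abs{a}^k \le 1$ exploited in Example \ref{a<B}.
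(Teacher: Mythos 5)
Your proposal is correct, but it does not follow the paper's route. The paper offers no argument at all: it simply asserts that the corollary ``follows directly from \cite[Theorem 1.40]{dynamic} and Corollary \ref{Back not D}.'' That citation is clean for the ``if'' direction, which you handle identically ($\limsup_n w_1\cdots w_n=\iy$ gives hypercyclicity by \cite[Theorem 1.40]{dynamic}, hence diskcyclicity). For the ``only if'' direction, however, Corollary \ref{Back not D} as literally stated concerns only scalar multiples $aB$ of the \emph{unweighted} shift, and a general weighted backward shift is not of that form; the paper leaves the reader to supply the bridge. Your contrapositive argument supplies it directly: from $\sup_n\prod_{k=1}^n w_k\le C$ and the identity $(B^nx)_0=x_n\prod_{k=1}^n w_k$ you trap $\D Orb(B,x)$ in the non-dense closed slab $\set{y:\abs{y_0}\le C\norm{x}}$, which is exactly the weighted analogue of the bound $\norm{T^ke_n}=\abs{a}^k\le1$ used in Example \ref{a<B}. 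So your proof is more self-contained and, strictly speaking, more complete than the paper's one-line citation; what the paper's phrasing buys is brevity and the suggestion that the scalar and weighted cases are governed by the same mechanism, which your argument makes explicit. The only points to tidy are the ones you already flag: the product indexing against \cite[Theorem 1.40]{dynamic}, and (if the weights are allowed to be complex) inserting absolute values around the partial products.
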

The following example shows that the diskcyclic operators exist on the complex line $\co$
\begin{example}\label{diskC}
Let $T\in B(\co)$ defined as $T(x)=2x$, then $T$ is diskcyclic on $\co$
\begin{proof}
We have $\D Orb(T,x)=\{\alpha 2^n x: n\geq 0$ and $ |\alpha|\leq 1\}$. Assume that $x=1$, then $\D Orb(T,1)=\{\alpha 2^n : n\geq 0$ and $ |\alpha|\leq 1\}$. Let $z=a+bi \in \co$ where $a,b \in \R$, then choose $k \in \N$ in which $2^k \geq \sqrt{|a|^2+|b|^2}$. It follows that $z=2^k\left(\frac{a}{2^k}+\frac{b}{2^k}i \right)\in \D Orb(T,1)$. Thus $T$ is diskcyclic operator.
\end{proof}
\end{example}
\begin{remark}\label{iso}
Every two $n$-dimensional Hilbert spaces over the field of complex numbers are isomorphic.
\end{remark}
The following proposition shows the existence of diskcyclic operators on every one dimensional complex Hilbert space.
\begin{proposition}\label{D on fin}
There exist a diskcyclic operator $T$ on a nontrivial complex Hilbert space $\h$ if and only if $dim(\h)=1$ or $dim(\h)=\iy$
\begin{proof}
Assume that $T$ is diskcyclic operator, then $T$ is supercyclic operator and hence by \cite{Hilden}, $dim(\h)=1$ or $dim(\h)=\iy$.\\
Conversely, follow from Example \ref{diskC}, Remark \ref{iso} and \cite{12}.
\end{proof}
\end{proposition}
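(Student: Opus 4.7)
The plan is to handle the two directions separately, relying on the diskcyclic $\Rightarrow$ supercyclic implication for the forward part and on the two explicit examples (one on $\co$, one on $\LN$) plus isomorphism for the reverse part.

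For the forward direction, suppose $T\in \D C(\h)$ on a nontrivial Hilbert space $\h$. By the implication chain displayed in the introduction, every diskcyclic operator is supercyclic, so $T$ is supercyclic. The Hilden--Wallen theorem (\cite{Hilden}) states that a supercyclic operator can exist on a complex Hilbert space $\h$ only when $\dim(\h)=1$ or $\dim(\h)=\infty$; applying this to $T$ gives the desired dichotomy. So no extra work is needed here beyond invoking the cited result.

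For the reverse direction, I would split into the two allowed cases. If $\dim(\h)=1$, then by Remark \ref{iso} there is a Hilbert-space isomorphism $U:\h \to \co$. Pulling the operator $T_0(x)=2x$ on $\co$ back via $U$, the operator $T=U^{-1}T_0 U$ on $\h$ is unitarily equivalent to $T_0$; since Example \ref{diskC} shows $T_0\in\D C(\co)$, Proposition \ref{1.3} (applied with $G=U^{-1}$, whose range is all of $\h$ and hence dense, and with the intertwining $T\, U^{-1}=U^{-1}T_0$) yields $T\in \D C(\h)$. If $\dim(\h)=\infty$, then $\h$ is a separable infinite-dimensional complex Hilbert space, so by the standard classification (the reference \cite{12} in the excerpt) there is a Hilbert-space isomorphism $V:\h \to \LN$. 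Example \ref{2B} furnishes a diskcyclic operator $S=2B$ on $\LN$, and the same intertwining argument via Proposition \ref{1.3} produces $T=V^{-1}S V \in \D C(\h)$.

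The only real step here is making the transfer of diskcyclicity through a Hilbert-space isomorphism explicit. This is not genuinely an obstacle, because Proposition \ref{1.3} was already proved in the preliminaries for bounded linear maps with dense range intertwining two operators, and a Hilbert-space isomorphism certainly has dense range. So the whole proof reduces to: invoke supercyclicity and Hilden--Wallen for necessity; invoke the two examples, the isomorphism classification, and Proposition \ref{1.3} for sufficiency. The brevity of the author's proof in the excerpt confirms that this is exactly the intended argument.
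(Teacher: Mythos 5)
Your proposal is correct and follows essentially the same route as the paper: diskcyclic $\Rightarrow$ supercyclic plus Hilden--Wallen for necessity, and transfer of the explicit examples through a Hilbert-space isomorphism (via Proposition \ref{1.3}) for sufficiency. The only cosmetic difference is that in the infinite-dimensional case you use Example \ref{2B} together with the classification of separable Hilbert spaces, whereas the paper's citation of \cite{12} is actually to the existence of hypercyclic (hence diskcyclic) operators on arbitrary separable infinite-dimensional Banach spaces; both settle that case.
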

It can be easily checked that the adjoint of the operator in Example \ref{diskC} is diskcyclic. However, this property is not true in general as the following example.
\begin{example}\label{d. not adj}
Let $T$ be defined on $\LZ$ as in Example \ref{2.12}. Then $T$ is diskcyclic but its adjoint is not.
\end{example}
\begin{proof}
From Example \ref{2.12} we have that $T$ is diskcyclic. Now, we will show that $T^*$ is not diskcyclic. It is clear that $T^*e_n=Be_n$, where $B$ is the bilateral backward weighted shift with weight sequence
\begin{equation*}
z_n=
\begin{cases} 2 & \text{if $n > 0$,}
\\
3 &\text{if $n \leq 0$.}
\end{cases}
\end{equation*}
Then for all increasing sequence $n_r$ of positive integers, we have
   $$\displaystyle\lim_{r\to\infty}\prod_{k=1}^{n_r}\frac{1}{z_{k}}=\displaystyle\lim_{r\to\infty}\prod_{k=1}^{n_r}\frac{1}{2}=\displaystyle\lim_{n_r\to\infty}\frac{1}{2^{n_r}} =0,$$ however,
   $$\displaystyle\lim_{r\to\infty}\brc{\prod_{k=1}^{n_r}z_{-k}}\brc{\prod_{k=1}^{n_r}\frac{1}{z_{k}}}=
   \displaystyle\lim_{r\to\infty}\brc{\prod_{k=1}^{n_r}3}\brc{\prod_{k=1}^{n_r}\frac{1}{2}}=
   \displaystyle\lim_{r\to\infty}\left(\frac{3}{2}\right)^{n_r}=\iy.$$
From Corollary \ref{biB c}, it follows that  $T^*$ is not diskcyclic.
\end{proof}
It has been shown that an  invertible $T\in \BH$ is hypercyclic(or supercyclic) if and only if $T^{-1}$ is hypercyclic(or supercyclic respectively). However, this equivalence is not necessarily true for the diskcyclicity case.
\begin{example}\label{d not inv}
Let $T$ be defined on $\co$ as in Example \ref{diskC}. then $T^{-1}$ is not diskcyclic.
\end{example}
\begin{proof}
Since $T^{-1}x=\left({1}/{2}\right)x$, then $\D Orb(T^{-1},y)$ is bounded for all $y\in \co$ and hence can not be dense in $\co$. It follows that $T^{-1}$ is not a diskcyclic operator.
\end{proof}
Here, we recall that in the infinite dimensional spaces there are also diskcyclic operators whose inverses are not diskcyclic.
\begin{example}\label {inv not}
Let $T$ be defined on $\LZ$ as in Example \ref{2.12}. Then $T$ is diskcyclic, but its inverse is not.
\end{example}
\begin{proof}
Since $|w_n|\geq 2$ for all $n\in \Z$, then $T$ is invertible. The inverse of $T$ is the bilateral backward weighted shift with the weight sequence
\begin{equation*}
z_n=\frac{1}{w_{n-1}}=
\begin{cases} \frac{1}{2} & \text{if $n > 0$,}
\\
\frac{1}{3} &\text{if $n \leq 0$.}
\end{cases}
\end{equation*}
For all increasing sequence $n_r$ of positive integers, we have
$$\displaystyle\lim_{r\to\infty}\prod_{k=1}^{n_r}\frac{1}{z_{k}}=\displaystyle\lim_{r\to\infty}\prod_{k=1}^{n_r}2=
\displaystyle\lim_{r\to\infty}{2^{n_r}} =\iy.$$
By Corollary \ref{biB c}, we have that $T^{-1}$ is not a diskcyclic operator.
\end{proof}
There are diskcyclic operators, in which their inverses are also diskcyclic as in the next example.
\begin{example}
Let $F$ be a bilateral forward weighted shift on $\LZ$ with weight sequence
\begin{equation*}
w_n=
\begin{cases} \frac{1}{2} & \text{if $n \geq 0$,}
\\
3 &\text{if $n< 0$.}
\end{cases}
\end{equation*}
Then both $F$ and $F^{-1}$ are diskcyclic.
\end{example}
\begin{proof}
Since $|w_n|\geq {1}/{2}$ for all $n\in \Z$, then $F$ is invertible. Also we have
   $$\displaystyle\lim_{n\to\infty}\prod_{k=1}^{n}\frac{1}{w_{-k}}=\displaystyle\lim_{n\to\infty}
   \prod_{k=1}^{n}\frac{1}{3}=\displaystyle\lim_{n\to\infty}\frac{1}{3^n}=0,$$ and
   $$\displaystyle\lim_{n\to\infty}\brc{\prod_{k=1}^{n}w_{k}}\brc{\prod_{k=1}^{n}
   \frac{1}{w_{-k}}}=\displaystyle\lim_{n\to\infty}\brc{\prod_{k=1}^{n}\frac{1}{2}}
   \brc{\prod_{k=1}^{n}\frac{1}{3}}=\displaystyle\lim_{n\to\infty}\frac{1}{2^n3^n}=0.$$\\
It follows from Corollary \ref{bif d} that $F$ is diskcyclic. Moreover, the inverse of $F$ is the bilateral backward weighted shift $Be_n=\left({1}/{w_{n-1}}\right)e_{n-1}$ with weight sequence
\begin{equation*}
z_n=\frac{1}{w_{n-1}}=
\begin{cases} 2 & \text{if $n > 0$,}
\\
\frac{1}{3} &\text{if $n \leq 0$.}
\end{cases}
\end{equation*}

 Since
   $$\displaystyle\lim_{n\to\infty}\prod_{k=1}^{n}\frac{1}{z_{k}}=\displaystyle\lim_{n\to\infty}
   \prod_{k=1}^{n}\frac{1}{2}=\displaystyle\lim_{n\to\infty}\frac{1}{2^n} =0,$$ and
   $$\displaystyle\lim_{n\to\infty}\brc{\prod_{k=1}^{n}z_{-k}}\brc{\prod_{k=1}^{n}\frac{1}{z_{k}}}=
   \displaystyle\lim_{n\to\infty}\brc{\prod_{k=1}^{n}\frac{1}{3}}\brc{\prod_{k=1}^{n}\frac{1}{2}}=
   \displaystyle\lim_{n\to\infty}\frac{1}{2^n3^n}=0,$$

by Corollary \ref{biB c}, we have that $F^{-1}$ is diskcyclic.
 \end{proof}
Bourdon and Feldman in \cite{Feldman} proved that if $Orb(T,x) \brc{\text{or}\, \co Orb\brc{T,x}}$ is somewhere dense, then $Orb(T,x) \brc{\text{or}\,\co Orb\brc{T,x}\,\text{respectively}}$ is everywhere dense set. However, If $\D Orb(T,x)$ is somewhere dense then the situation is different.
\begin{example}\label{some not every}
Let $T\in B(\co)$ defined as $T(y)=(1/2)y$. Then, there is a vector $x\in \co$ such that $\D Orb(T,x)$ is somewhere dense in $\co$ but not everywhere dense in $\co$.
\end{example}
\begin{proof}
Let $x=1$, it is clear that $\D Orb(T,1)=\set{z: z\in \co,\, |z|\le 1}$. Then we have
 $$\left(\overline{\D Orb(T,1)}\right)^\circ =\{z: z\in \co,\, |z|< 1\}\neq \phi$$
By Example \ref{d not inv}, $\overline{\D Orb(T,1)}$ is not everywhere dense in $\co$.
\end{proof}
The idea of the above example is obviously due to the fact that the disk orbit of any operator on $\co$ contains at least a non-trivial closed disk which can never be nowhere dense set. In other words, the disk orbit of any operator on $\co$ is somewhere dense in $\co$. Therefore, if the somewhere density of a disk orbit implied to the everywhere density of the disk orbit, any operator on $\co$ would be diskcyclic, which is a contradiction to Example \ref{some not every}.\\

The following lemma will be our main tool to prove the next theorem.
\begin{lemma}\label{inv. closed}
If $T\in \BH$ is invertible and $$\A=\{x\in \h:\norm{x-\al_i T^{n_i}y}\to 0\, \text{for some increasing sequence} \, \{n_i\}\subset \mathbb N \, \text {and}\,\al_i\in \D \}$$ is a non trivial subset of $\h$, then $\A$ is an invariant closed subset of $\h$ under both $T$ and $T^{-1}.$
\end{lemma}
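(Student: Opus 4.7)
The plan is to verify three properties of $\A$ separately: closedness, forward invariance under $T$, and forward invariance under $T^{-1}$. Throughout, I will use that $T$ being invertible in $\BH$ forces $T^{-1}$ to be bounded (by the open mapping theorem), so both $T$ and $T^{-1}$ are continuous. The nontriviality hypothesis plays no role in establishing these three structural facts; presumably it only guarantees that the resulting invariant closed set is interesting for whatever theorem comes next.

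For the invariance claims, let $x \in \A$ and fix a witness sequence $\al_i T^{n_i} y \to x$ with $|\al_i| \le 1$ and $\{n_i\}$ strictly increasing in $\N$. Continuity of $T$ yields $\al_i T^{n_i + 1} y \to Tx$; since $\{n_i + 1\}$ is still strictly increasing in $\N$ and the coefficients still lie in $\D$, this gives $Tx \in \A$. Similarly, continuity of $T^{-1}$ yields $\al_i T^{n_i - 1} y \to T^{-1} x$, and after discarding at most one initial term to ensure $n_i \ge 1$, the exponents $\{n_i - 1\}$ form a strictly increasing sequence in $\N$, so $T^{-1} x \in \A$.

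For closedness, suppose $x_k \to x$ with each $x_k \in \A$, and fix for each $k$ a witness sequence $\al_i^{(k)} T^{n_i^{(k)}} y \to x_k$ as $i \to \infty$. Because $\{n_i^{(k)}\}_i$ is strictly increasing in $\N$ for each fixed $k$, one has $n_i^{(k)} \to \infty$ as $i \to \infty$. I would then perform a diagonal selection, choosing indices $i_k$ inductively so that both $\|x_k - \al_{i_k}^{(k)} T^{n_{i_k}^{(k)}} y\| < 1/k$ and $n_{i_k}^{(k)} > n_{i_{k-1}}^{(k-1)}$ hold. Writing $\beta_k = \al_{i_k}^{(k)}$ and $m_k = n_{i_k}^{(k)}$, one gets $|\beta_k| \le 1$, a strictly increasing sequence $\{m_k\} \subset \N$, and the triangle inequality $\|x - \beta_k T^{m_k} y\| \le \|x - x_k\| + \|x_k - \beta_k T^{m_k} y\| \to 0$, so $x \in \A$.

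The only step that requires genuine care is the diagonal construction, where one must simultaneously control the approximation error and enforce strict monotonicity of the new exponent sequence; both demands are met because $n_i^{(k)} \to \infty$ for each $k$, leaving arbitrarily large exponents available at every stage. Outside of that, the proof amounts to applying continuity of $T$ and $T^{-1}$ to shift exponents by $\pm 1$.
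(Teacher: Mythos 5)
Your proof is correct. The invariance half is exactly the paper's argument: apply the bounded operators $T$ and $T^{-1}$ to the witness sequence and shift the exponents by $\pm 1$ (you are slightly more careful than the paper in discarding an initial term so that $n_i-1$ stays in $\N$). Where you diverge is the closedness half. The paper shows the complement is open: for $v\notin\A$ it extracts an $\epsilon>0$ and an $N$ with $\norm{v-\al T^ny}>\epsilon$ for all $n\ge N$ and $\abs{\al}\le1$, and then argues that the ball $B(v,\epsilon)$ misses $\A$ (strictly speaking the paper should shrink to radius $\epsilon/2$ there, since the triangle inequality only gives $\norm{x-\al T^ny}>\epsilon-\norm{v-x}$, but the idea is sound: points of the ball stay at positive distance from the tail of the disk orbit, and only finitely many exponents remain, which a strictly increasing sequence must eventually exceed). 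You instead take a sequence $x_k\to x$ in $\A$ and run a diagonal extraction, using that each witness exponent sequence tends to infinity to enforce both the $1/k$ error bound and strict monotonicity of the new exponents. Both routes are standard and both work; your diagonal argument is a little more hands-on but avoids the $\epsilon$ bookkeeping in the open-complement argument, while the paper's version makes visible the underlying fact that $\A$ is the intersection over $N$ of the closures of the orbit tails, hence closed for structural reasons. No gaps in your write-up.
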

\begin{proof}
Let us choose $x\in \A$, then
\begin{eqnarray*}
\norm{T(x)-\al_iT^{n_i+1}y}&=&\left\|T\brc{\al_iT^{n_i}y-x}\right\| \\
&\leq&  \left\|T\right\|\left\|\al_iT^{n_i}y-x\right\|\to 0.
\end{eqnarray*}
It follows that $Tx\in \A$. By the same way
\begin{eqnarray*}
\norm{T^{-1}(x)-\al_iT^{n_i-1}y}&=&\norm{T^{-1}\brc{x-\al_iT^{n_i}y}}\\
&\leq& \norm{T^{-1}}\norm{x-\al_iT^{n_i}y}\to 0.
\end{eqnarray*}
Thus $T^{-1}x\in \A$.

Now we shall show that $\h\backslash \A$ is open. Let us choose $v\in \h\backslash \A$, then there is an $\epsilon >0$ such that for any large positive number $N$, we have  $\norm{v-\al_i T^ny}>\epsilon$ for all $n\geq N$ and all $\al_i\in \D$. Suppose that $B(v,\epsilon)=\{x\in \h:\norm{v-x}<\epsilon\}$. Then, it is clear that $\norm{x-\al_i T^ny}>\epsilon$ for all $x\in B(v,\epsilon)$ and so $B(v,\epsilon)\subseteq \h\backslash \A$. It follows that $\A$ is a closed set and invariant under both $T$ and $T^{-1}.$
\end{proof}

\begin{theorem}\label{T and inverse}
Let $T\in \BH$ be an invertible operator, and let us suppose that we have the following properties.
\begin{enumerate}
\item \label{a5} Both $T$ and $T^{-1}$ are diskcyclic operators.
\item \label{b5} There is a vector $y$ in $\h$ such that $\overline{\D Orb(T,y)}=\overline{ \D Orb(T^{-1},y)}=\h$.
\item \label{c5} There is a vector $y$ in $\h$ such that $\overline{\set{\D Orb(T,y)\cup \D Orb(T^{-1},y)}}=\h$.
\item \label{d5} Either $T$ or $T^{-1}$ is diskcyclic operator.
\end{enumerate}
Then \ref{a5} $\Leftrightarrow$ \ref{b5} $\Rightarrow$ \ref{c5}  $\Leftrightarrow$ \ref{d5}.
\end{theorem}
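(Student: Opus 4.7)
The plan is to handle three easy implications plus the substantive one $(\ref{c5})\Rightarrow(\ref{d5})$. For $(\ref{a5})\Rightarrow(\ref{b5})$ I would invoke Proposition \ref{cap}: both $\D C(T)$ and $\D C(T^{-1})$ are dense $G_\delta$ subsets of $\h$, so by the Baire category theorem their intersection is itself dense $G_\delta$, and any vector in it serves as the $y$ of $(\ref{b5})$. The converse $(\ref{b5})\Rightarrow(\ref{a5})$, together with $(\ref{b5})\Rightarrow(\ref{c5})$ and $(\ref{d5})\Rightarrow(\ref{c5})$, is immediate from the inclusion $\D Orb(T,y)\subseteq \D Orb(T,y)\cup \D Orb(T^{-1},y)$.

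For the substantive implication $(\ref{c5})\Rightarrow(\ref{d5})$, the plan is to exploit Lemma \ref{inv. closed}. Fix $y$ as in $(\ref{c5})$ and introduce
\[
\A_T=\set{x\in\h:\al_i T^{n_i}y\to x \text{ for some increasing } \set{n_i}\subset \N \text{ and } \al_i\in\D},
\]
together with its analogue $\A_{T^{-1}}$ built from $T^{-n_i}$ in place of $T^{n_i}$. By the lemma each set is closed and invariant under both $T$ and $T^{-1}$; moreover, multiplying the coefficients $\al_i$ by any $\gamma\in\D$ shows directly that $\gamma\A_T\subseteq\A_T$, and likewise for $\A_{T^{-1}}$.

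The first key step is to show $\A_T\cup\A_{T^{-1}}=\h$. Given $v\in\h$, hypothesis $(\ref{c5})$ produces a sequence $\al_k T^{n_k}y\to v$ with $n_k\in\Z$ and $\abs{\al_k}\le 1$. Passing to a subsequence, either $n_k\to+\infty$ (placing $v$ in $\A_T$), or $n_k\to-\infty$ (placing $v$ in $\A_{T^{-1}}$), or $n_k$ remains bounded, in which case $v$ belongs to the bilateral disk orbit $\set{\al T^n y:n\in\Z,\abs{\al}\le 1}$. In the infinite-dimensional case this bilateral orbit is a countable union of compact disks and hence meager, while $\h\setminus(\A_T\cup\A_{T^{-1}})$ is open; since Baire rules out nonempty open meager sets, this complement is empty.

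To conclude, $y$ itself must lie in $\A_T$ or in $\A_{T^{-1}}$; without loss of generality $y\in\A_T$. The $T$- and $T^{-1}$-invariance of $\A_T$ then forces $T^n y\in\A_T$ for every $n\in\Z$, and the disk-invariance promotes this to $\al T^n y\in\A_T$ for all $\abs{\al}\le 1$. The whole bilateral disk orbit of $y$ is thus contained in $\A_T$; since $\A_T$ is closed and that orbit is dense by $(\ref{c5})$, we obtain $\A_T=\h$. As $\A_T\subseteq\overline{\D Orb(T,y)}$, this makes $y$ diskcyclic for $T$, proving $(\ref{d5})$. The main obstacle is the meager-orbit step, which really uses $\dim\h=\infty$; the remaining case $\dim\h=1$ allowed by Proposition \ref{D on fin} is dispatched directly, since $(\ref{c5})$ then forces $T$ to be scalar multiplication by some $\lm\in\co$ with $\abs{\lm}\neq 1$, and Example \ref{diskC} yields diskcyclicity of $T$ or of $T^{-1}$.
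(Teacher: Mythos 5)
Your argument is correct and rests on the same two pillars as the paper's proof --- Lemma \ref{inv. closed} and the Baire category theorem --- but you run the crucial implication \ref{c5} $\Rightarrow$ \ref{d5} along a genuinely different and, in fact, tighter route. The paper applies Baire to the two orbit closures, picks an orbit point $\al_p T^p y$ interior to the somewhere dense one, and then asserts that $\al_p T^p y$ is a limit of $\al_i T^{n_i}y$ along an \emph{increasing} sequence $\set{n_i}$ (the step needed to place it in $\A$) without justifying why the exponents can be forced to infinity; it also jumps from $\al_p T^n y\in\A$ for all $n$ to $\A=\h$, which needs an extra rescaling observation. You instead apply Baire to the open set $\h\setminus(\A_T\cup\A_{T^{-1}})$, which your trichotomy on subsequences traps inside the meager bilateral orbit $\bigcup_{n\in\Z}\set{\al T^ny:\abs{\al}\le 1}$, so that $y$ itself lands in $\A_T$ or $\A_{T^{-1}}$; the $T^{\pm1}$-invariance together with the disk-invariance $\gamma\A_T\subseteq\A_T$ then legitimately captures the entire bilateral disk orbit, hence all of $\h$, inside the closed set $\A_T$. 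This buys you clean justifications of exactly the two steps the paper glosses over. One loose end to tighten: your meagerness justification (``compact, hence nowhere dense'') is valid only when $\dim\h=\iy$, and you then treat $\dim\h=1$ separately, which leaves $2\le\dim\h<\iy$ unaddressed; this is harmless --- each disk $\set{\al T^ny:\abs{\al}\le1}$ lies in the proper closed subspace $\co T^ny$ and so is nowhere dense whenever $\dim\h\ge2$ --- but it should be said, since hypothesis \ref{c5} by itself does not presuppose that $\h$ supports a diskcyclic operator.
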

\begin{proof}
\ref{a5} $\Rightarrow$ \ref{b5}: Let $T$ and $T^{-1}$ be diskcyclic operators. Since the set of all diskcyclic vectors is dense $G_\delta$, then by the Baire Category Theorem, we deduce that \ref{a5} $\Rightarrow$ \ref{b5}.\\
The implications \ref{b5} $\Rightarrow$ \ref{a5},\ref{b5} $\Rightarrow$ \ref{c5} and \ref{d5} $\Rightarrow$ \ref{c5} are trivial.\\
\ref{c5} $\Rightarrow$ \ref{d5}: By hypothesis, we have $\overline{\set{\D Orb(T,y)\cup \D Orb(T^{-1},y)}}=\h$, the Baire Category Theorem indicates that either $\overline{\D Orb(T,y)}$ or $\overline{ \D Orb(T^{-1},y)}$ has nonempty interior.
If one of them, say $\overline{\D Orb(T,y)}$ is nowhere dense, then $\overline{ \D Orb(T^{-1},y)}=\h$, and therefore $T^{-1}$ is diskcyclic. Otherwise, if $\overline{\D Orb(T,y)}$ is somewhere dense, let us find a set $\A$ exactly as in Lemma \ref{inv. closed}, $p$ be a positive integer ,\, $0\neq|\al_p|\leq 1$ and $\epsilon >0$ such that $B(\al_p T^py,\epsilon)\subseteq \overline{\D Orb(T,y)}$. It follows that there exist an increasing sequence $n_i$; $n_i \geq p \geq 0$ for all $i\ge 0$ and a sequence $\al_i\in \D$ such that
$$\norm{\al_p T^py-\al_i T^{n_i}y}\to 0 \mbox{ as } i \to \iy.$$
It follows that  $\al_p T^py\in \A$. Since $\A$ is invariant under both operators $T$ and $T^{-1}$, then $\al_p y\in T^{-p}(\A)\subseteq \A$ and $T^n(\al_p y)\in T^n(\A)\subseteq \A \mbox{ for all  }n\in \N$. Therefore,
$$\h=\overline{\set{\D Orb(T,y)\cup \D Orb(T^{-1},y)}}=\A\subseteq \overline{\D Orb(T,y)}$$
Thus, $T$ is diskcyclic. 
\end{proof}
\begin{corollary}\label{suf somwhere}
Let $T$ be an invertible operator and, let $y\in \h$ such that $\D Orb(T,y)$ is somewhere dense and $\overline{\D Orb(T,y)\cup \D Orb(T^{-1},y)}=\h$. Then $\D Orb(T,y)$ is everywhere dense in $\h$.
\end{corollary}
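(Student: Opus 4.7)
The plan is to read this corollary as a direct consequence of the mechanism developed in the proof of Theorem \ref{T and inverse}, specifically the branch \ref{c5} $\Rightarrow$ \ref{d5} in the case that $\overline{\D Orb(T,y)}$ has nonempty interior. The hypothesis $\overline{\D Orb(T,y)\cup \D Orb(T^{-1},y)}=\h$ is exactly condition (\ref{c5}) of that theorem, and the extra assumption that $\D Orb(T,y)$ itself is somewhere dense lets us skip the dichotomy delivered by the Baire Category Theorem and head straight into the ``somewhere dense'' case.

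First I would invoke Lemma \ref{inv. closed} to introduce the auxiliary set
\[
\A=\set{x\in\h:\norm{x-\al_i T^{n_i}y}\to 0 \text{ for some increasing } \{n_i\}\subset\N \text{ and some } \al_i\in\D},
\]
which, by that lemma, is closed and invariant under both $T$ and $T^{-1}$. The whole point of $\A$ is that it contains $\D Orb(T,y)$ by construction, and being $T^{-1}$-invariant it is stable under pulling back along powers of $T$, so once a single scaled iterate $\al_p T^p y$ lies in $\A$, the entire disk orbit of $y$ (forward and backward) ends up inside $\A$.

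Next I would exploit the somewhere-density hypothesis: there exist $p\in\N$, $\al_p\in\D$ with $0<\abs{\al_p}\le 1$, and $\eps>0$ such that $B(\al_p T^p y,\eps)\subseteq\overline{\D Orb(T,y)}$. Approximating $\al_p T^p y$ by elements of $\D Orb(T,y)$ (and moving to a large enough tail so $n_i\ge p$) shows $\al_p T^p y\in\A$. Applying $T^{-p}$ repeatedly gives $\al_p y\in\A$, and then applying positive and negative powers of $T$ (using both invariances from Lemma \ref{inv. closed}) gives $\al_p T^n y\in\A$ for every $n\in\Z$. Since $\A$ is closed and a cone under the disk multiplications present in its definition, this actually places $\D Orb(T,y)\cup\D Orb(T^{-1},y)\subseteq\A$.

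Finally, I would close the argument by taking the closure: using the hypothesis $\overline{\D Orb(T,y)\cup\D Orb(T^{-1},y)}=\h$, together with the closedness of $\A$,
\[
\h=\overline{\D Orb(T,y)\cup\D Orb(T^{-1},y)}\subseteq\A\subseteq\overline{\D Orb(T,y)},
\]
which gives everywhere density of $\D Orb(T,y)$. The only delicate step I expect is verifying that $\al_p T^p y\in\A$ (i.e., picking a subsequence of $\D Orb(T,y)$ converging to $\al_p T^p y$ whose indices form an increasing sequence $n_i\ge p$ compatible with the defining condition of $\A$), and then confirming that the invariance under $T^{-1}$ produced by Lemma \ref{inv. closed} is strong enough to pull the factor $T^p$ off and leave $\al_p y\in\A$; everything after that is Baire-style closure bookkeeping.
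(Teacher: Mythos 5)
Your proposal is correct and takes essentially the same approach as the paper: the paper offers no separate proof of Corollary \ref{suf somwhere}, treating it as exactly the somewhere-dense branch of the implication \ref{c5} $\Rightarrow$ \ref{d5} in the proof of Theorem \ref{T and inverse}, which is the argument you reconstruct (the set $\A$ of Lemma \ref{inv. closed}, the membership $\al_p T^p y\in\A$, the pull-back by $T^{-p}$ and push-forward by powers of $T$, and the sandwich $\h=\overline{\D Orb(T,y)\cup\D Orb(T^{-1},y)}\subseteq\A\subseteq\overline{\D Orb(T,y)}$). The one point you flag as delicate (that the invariances let you pass from $\al_p T^p y$ to the whole orbit, and then to all of $\h$ via the closedness of $\A$ under multiplication by disk scalars and the fact that multiplication by the nonzero scalar $\al_p$ is a homeomorphism) is glossed over at the same level of detail in the paper itself.
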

The following proposition gives us some characterizations of the spectrum of diskcyclic operators.
\begin{proposition}
Let $T\in \D C(\h)$. Then we have the following properties.
\begin{enumerate}
\item \label{a3} $\sigma(T)\cup \partial(r\D)$ is connected for some $r\geq 1$.
\item \label{b3} If $\alpha\in \sigma_p(T^*)$ then $dim Ker(T^*-\alpha)^k=1$ for all $k\geq 1$.
\end{enumerate}
\end{proposition}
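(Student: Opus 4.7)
I tackle the two spectral assertions separately.

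\emph{For (\ref{a3}).} Since diskcyclicity implies supercyclicity, I invoke the classical spectral structure theorem for supercyclic operators: there exists $r_0>0$ such that every connected component of $\sigma(T)$ meets the circle $\partial(r_0\D)$, equivalently $\sigma(T)\cup\partial(r_0\D)$ is connected. If $r_0\ge 1$ then $r=r_0$ works. If instead $r_0<1$, Corollary \ref{2.17} guarantees that every connected component $C$ of $\sigma(T)$ contains a point of modulus at least $1$; since $C$ also meets $\partial(r_0\D)\subset B(0,1)$ and is connected, the intermediate value theorem applied to $z\mapsto|z|$ on $C$ forces $C\cap\partial\D\neq\emptyset$. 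Hence $\sigma(T)\cup\partial\D$ is connected and $r=1$ works.

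\emph{For (\ref{b3}).} By Proposition \ref{2.16}, $\sigma_p(T^*)\subseteq\{\alpha\}$ with $|\alpha|>1$. I plan to prove (I) $\dim\ker(T^*-\alpha)=1$ and (II) $\ker(T^*-\alpha)^2=\ker(T^*-\alpha)$. From (II) a short induction yields $\ker(T^*-\alpha)^k=\ker(T^*-\alpha)$ for every $k\ge 1$: if $(T^*-\alpha)^{k+1}h=0$ then $(T^*-\alpha)h\in\ker(T^*-\alpha)^k=\ker(T^*-\alpha)$, whence $h\in\ker(T^*-\alpha)^2=\ker(T^*-\alpha)$; combined with (I) this gives the claim. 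For (I), if $v_1,v_2\in\ker(T^*-\alpha)$ were linearly independent, the continuous linear surjection $\Phi\colon\h\to\co^2$, $\Phi(h)=(\langle h,v_1\rangle,\langle h,v_2\rangle)$, would be open by the open mapping theorem, so $\Phi(\D Orb(T,x))$ would be dense in $\co^2$ for any diskcyclic $x$; but $\Phi(\beta T^n x)=\beta\overline{\alpha}^n(\langle x,v_1\rangle,\langle x,v_2\rangle)$ lies in a single one-dimensional complex subspace of $\co^2$---a contradiction.

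For (II), suppose toward contradiction that $w\in\ker(T^*-\alpha)^2\setminus\ker(T^*-\alpha)$ and set $v=(T^*-\alpha)w\neq 0$. A short induction on $n$ yields $(T^*)^nw=\alpha^nw+n\alpha^{n-1}v$. For a diskcyclic $x$, put $c_1=\langle x,v\rangle$ and $c_2=\langle x,w\rangle$; density of the disk orbit forces $c_1\neq 0$ (otherwise the $v$-projection of the disk orbit is $\{0\}$). The open surjection $\Phi(h)=(\langle h,v\rangle,\langle h,w\rangle)$ sends each $\beta T^n x$ to a point on the complex line $\ell_n=\{(u_1,u_2):u_2=s_n u_1\}$ of slope $s_n=c_2/c_1+n/\overline{\alpha}$. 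The slopes $\{s_n\}_{n\ge 0}$ form an arithmetic progression in $\co$ with common difference $1/\overline{\alpha}$, so choosing $z_2=c_2/c_1+i/(2\overline\alpha)$ the elementary distance formula $\operatorname{dist}\bigl((1,z_2),\ell_n\bigr)=|z_2-s_n|/\sqrt{1+|s_n|^2}$ gives a uniform positive lower bound on $\operatorname{dist}\bigl((1,z_2),\ell_n\bigr)$ (in fact the bound tends to $1$ as $n\to\infty$). Hence $(1,z_2)\notin\overline{\Phi(\D Orb(T,x))}$, contradicting density. The main obstacle is precisely this last density-obstruction: although the disks $\overline{\D}\cdot w_n$ on $\ell_n$ grow in radius like $|\alpha|^n$, the slopes $s_n$ walk away at rate $1/|\alpha|$ per unit of $n$, and one must carefully confirm that the countable union of such disks cannot fill any neighborhood of $(1,z_2)$.
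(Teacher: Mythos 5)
Your proof is correct, and for the second assertion it takes a genuinely different route from the paper. The paper disposes of both parts essentially by citation: part (1) follows from Herrero's spectral theorem for supercyclic operators (every component of $\sigma(T)$ meets some circle $\partial(r\D)$, $r>0$) combined with Corollary~\ref{2.17} and a lemma from Bayart--Matheron to push $r$ up to $1$, and part (2) is quoted directly from Herrero's Proposition 3.1. For part (1) you follow the same skeleton but replace the cited lemma with a direct intermediate-value argument on $z\mapsto|z|$ over a connected component; that is a harmless and correct substitution (the only standard fact you use implicitly is that, $\sigma(T)$ being compact, connectedness of $\sigma(T)\cup\partial(r_0\D)$ forces every component of $\sigma(T)$ to meet the circle). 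For part (2) you give a self-contained proof in place of the citation, and it checks out: the computation $(T^*)^nw=\alpha^nw+n\alpha^{n-1}v$ is right, $\Phi(\beta T^nx)$ does lie on the line $u_2=s_nu_1$ with $s_n=c_2/c_1+n/\overline{\alpha}$, and since $|z_2-s_n|=\sqrt{n^2+1/4}/|\alpha|$ while $\sqrt{1+|s_n|^2}\le 1+|c_2/c_1|+n/|\alpha|$, the distances $\operatorname{dist}\bigl((1,z_2),\ell_n\bigr)$ are bounded below away from $0$ uniformly in $n$ --- note that this bounds the distance to the entire line $\ell_n$, so the closing worry about the growing disks $\overline{\D}\cdot w_n$ filling a neighborhood is already resolved and no further verification is needed. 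What your approach buys is independence from Herrero's paper (only the ``at most one eigenvalue'' statement, which the paper itself reproves in Proposition~\ref{2.16}, is still needed); what it costs is length, and it is worth noting that your argument only uses supercyclicity of $T$, so it is really a reproof of the quoted classical fact rather than something specific to diskcyclicity.
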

\begin{proof}
 First we prove (\ref{a3}). Since $T\in SC(\h)$ then, from \cite[Proposition 3.1]{limits}, we have $\sigma(T)\cup \partial(r\D)$ is connected for some $r>0$. Moreover, from Corollary \ref{2.17}, we have $\sigma(T)\not\subset (\D)^\circ$. By \cite[Lemma 1.25]{dynamic}, it is clear that $\sigma(T)\cup \partial(r\D)$ is connected for some $r\geq 1$.
For part (\ref{b3}), the proof follows immediately from \cite[Proposition 3.1.]{limits}
\end{proof}

\end{document}